\theoremstyle{plain}
\newtheorem{thm}{Theorem}[section]
\newtheorem{lem}[thm]{Lemma}
\newtheorem{claim}{Claim}
\theoremstyle{definition}
\newtheorem{dfn}[thm]{Definition}
\newtheorem{rem}[thm]{Remark}
\newtheorem*{conv}{Convention}
\theoremstyle{remark}
\newtheorem*{ac}{Acknowledgments}
\newtheorem*{cpf}{{\it Proof of Claim}}
\numberwithin{equation}{thm}
\renewcommand{\qedsymbol}{$\blacksquare$}
\def\Ext{\operatorname{Ext}}
\def\Tor{\operatorname{Tor}}
\def\mod{\operatorname{\mathsf{mod}}}
\def\DD{\operatorname{\mathsf{D^b}}}
\def\P{\operatorname{\mathsf{perf}}}
\def\thick{\operatorname{\mathsf{thick}}}
\def\rest{\operatorname{\mathsf{rest}}}
\def\CM{\operatorname{\mathsf{CM}}}
\def\lCM{\operatorname{\underline{\mathsf{CM}}}}
\def\Spec{\operatorname{\mathsf{Spec}}}
\def\Sing{\operatorname{\mathsf{Sing}}}
\def\nat{\operatorname{\mathsf{can}}}
\def\sus{\mathsf{\Sigma}}
\def\m{\mathfrak m}
\def\p{\mathfrak p}
\def\q{\mathfrak q}
\def\A{{\mathbf A}}
\def\B{{\mathbf B}}
\def\C{{\mathbf C}}
\def\D{{\mathbf D}}
\def\E{{\mathbf E}}
\def\X{{\mathcal X}}
\def\Y{{\mathcal Y}}
\def\Z{{\mathcal Z}}
\def\WW{{\mathcal W}}
\def\T{{\mathcal T}}
\def\U{{\mathcal U}}
\def\AA{{\mathcal A}}
\def\BB{{\mathcal B}}
\def\CC{{\mathcal C}}
\def\W{{\mathbb I}}
\def\V{{\mathbb N}}
\def\lSupp{{\mathbb S}}
\def\Q{{\mathbb J}}
\def\ZZ{{\mathbb Z}}
\begin{document}
\setlength{\baselineskip}{15pt}
\title[Thick subcategories over Gorenstein local rings]{Thick subcategories over Gorenstein local rings that are locally hypersurfaces on the punctured spectra}
\author{Ryo Takahashi}
\address{Department of Mathematical Sciences, Faculty of Science, Shinshu University, 3-1-1 Asahi, Matsumoto, Nagano 390-8621, Japan}
\email{takahasi@math.shinshu-u.ac.jp}
\thanks{2010 {\em Mathematics Subject Classification.} Primary 13D09; Secondary 18E30, 13C60, 13C14}
\thanks{{\em Key words and phrases.} thick subcategory, specialization-closed subset, derived category, Cohen-Macaulay module, hypersurface}
\begin{abstract}
Let $R$ be a Gorenstein local ring which is locally a hypersurface on the punctured spectrum.
In this paper, we classify thick subcategories of the bounded derived category of finitely generated $R$-modules.
Moreover, using this classification, we also classify thick subcategories of finitely generated $R$-modules, and find out the relationships with thick subcategories of Cohen-Macaulay $R$-modules.
\end{abstract}
\maketitle
\tableofcontents

\section{Introduction}

A thick subcategory of a triangulated category is by definition a full triangulated subcategory which is closed under direct summands.
The classification problem of thick subcategories of a given triangulated category has been studied in stable homotopy theory, ring theory, modular representation theory and algebraic geometry.
The first study was done by Devinatz, Hopkins and Smith \cite{DHS,HS}, who classified thick subcategories of the category of compact objects in the $p$-local stable homotopy category.
Later on, Hopkins \cite{H} and Neeman \cite{N1} classified thick subcategories of the derived category of perfect complexes over a commutative noetherian ring in terms of specialization-closed subsets of the prime ideal spectrum of the ring.
This Hopkins-Neeman theorem was generalized by Thomason \cite{T} to quasi-compact and quasi-separated schemes.
Benson, Carlson and Rickard \cite{BCR} gave a classification theorem of thick subcategories of the stable category of finitely generated representations of a finite $p$-group in terms of closed homogeneous subvarieties of the maximal ideal spectrum of the group cohomology ring.
This was extended by Friedlander and Pevtsova \cite{FP} to finite group schemes.
Many other results on classifying thick subcategories and related results have been obtained so far; see \cite{ABCIP,Ba1,Ba2,Ba3,BIK1,BIK2,BIK3,Br,BKS,I1,I2,K1,K2,hcls}.

Let $R$ be a Gorenstein local ring which is locally a hypersurface on the punctured spectrum.
Denote by $\mod(R)$ the category of finitely generated $R$-modules, by $\DD(R)$ the bounded derived category of $\mod(R)$, by $\CM(R)$ the category of (maximal) Cohen-Macaulay $R$-modules and by $\lCM(R)$ the stable category of $\CM(R)$.
Recently, as a higher dimensional version of the work of Benson, Carlson and Rickard, Takahashi \cite{stcm} gave a classification theorem of thick subcategories of $\lCM(R)$.
In this paper, we will classify thick subcategories of $\DD(R)$ by taking the {\em infinite projective dimension loci} of those subcategories, which are specialization-closed subsets of $\Spec(R)$ contained in the singular locus $\Sing(R)$.
Moreover, using this classification, we will also classify thick subcategories of $\mod(R)$, and find out the relationships among thick subcategories of $\DD(R)$, $\mod(R)$, $\CM(R)$ and $\lCM(R)$.
The main result of this paper will be stated and proved in Section 5.

\begin{conv}
Throughout the rest of this paper, let $R$ be a commutative Gorenstein local ring of Krull dimension $\dim R=d$.
Denote by $\m$ the maximal ideal of $R$ and by $k$ the residue field of $R$.
\end{conv}

\section{The main result of \cite{stcm}}

In this section, we recall the main result of the paper \cite{stcm}, which will form the basis of the main result of the present paper.
We denote the category of finitely generated $R$-modules by $\mod(R)$, the full subcategory of (maximal) Cohen-Macaulay $R$-modules by $\CM(R)$ and its stable category by $\lCM(R)$.
We can regard a full subcategory of $\CM(R)$ as a full subcategory of $\mod(R)$.

First of all, we state the definitions of thick subcategories.

\begin{dfn}
\begin{enumerate}[(1)]
\item
A subcategory $\X$ of a category $\CC$ is called {\em strict} provided that $\X$ is closed under isomorphisms: if $X$ is an object in $\X$ and $Y$ is an object isomorphic to $X$ in $\CC$, then $Y$ is also an object in $\X$.
\item
A nonempty strict full subcategory $\X$ of a triangulated category $\T$ is called {\em thick} provided that the following hold.
\begin{enumerate}[(a)]
\item
$\X$ is closed under direct summands: if $X$ is an object in $\X$ and $Y$ is a direct summand of $X$ in $\T$, then $Y$ is also an object in $\X$.
\item
$\X$ is closed under exact triangles: for an exact triangle $L \to M \to N \to$ in $\T$, if two of $L,M,N$ are in $\X$, then so is the third.
\end{enumerate}
\item
A nonempty strict full subcategory $\X$ of $\mod(R)$ is called {\em thick} provided that the following hold.
\begin{enumerate}[(a)]
\item
$\X$ is closed under direct summands: if $X$ is an object in $\X$ and $Y$ is a direct summand of $X$ in $\mod(R)$, then $Y$ is also an object in $\X$.
\item
$\X$ is closed under short exact sequences: for an exact sequence $0 \to L \to M \to N \to 0$ of finitely generated $R$-modules, if two of $L,M,N$ are in $\X$, then so is the third.
\end{enumerate}
\item
A nonempty strict full subcategory $\X$ of $\CM(R)$ is called {\em thick} provided that the following hold.
\begin{enumerate}[(a)]
\item
$\X$ is closed under direct summands: if $X$ is an object in $\X$ and $Y$ is a direct summand of $X$ in $\CM(R)$, then $Y$ is also an object in $\X$.
\item
$\X$ is closed under short exact sequences: for an exact sequence $0 \to L \to M \to N \to 0$ of Cohen-Macaulay $R$-modules, if two of $L,M,N$ are in $\X$, then so is the third.
\end{enumerate}
\end{enumerate}
\end{dfn}

Note that a thick subcategory in each sense contains the zero object.
There are a lot of examples of a thick subcategory in each sense.
For instance, for a fixed object $X$ the full subcategories determined by vanishing of $\Tor_{\gg 0}^R(X,-)$, $\Ext_R^{\gg 0}(X,-)$ and $\Ext_R^{\gg 0}(-,X)$ are thick subcategories.
In particular, the full subcategory consisting of all objects that have finite projective dimension is thick.
More generally, the objects of complexity less than or equal to some fixed nonnegative integer form a thick subcategory.
Hence the full subcategory of modules having bounded Betti numbers and the full subcategory of modules with finite complexity are thick.

Next, we state the definitions of nonfree loci and stable supports.

\begin{dfn}
\begin{enumerate}[(1)]
\item
For an object $M$ of $\CM(R)$, we denote by $\V(M)$ the {\em nonfree locus} of $M$, namely, the set of prime ideals $\p$ of $R$ such that the $R_\p$-module $M_\p$ is nonfree.
\item
For a full subcategory $\Z$ of $\CM(R)$, we denote by $\V(\Z)$ the {\em nonfree locus} of $\Z$, namely, the union of $\V(Z)$ where $Z$ runs through all objects in $\Z$.
\item
For a subset $\Phi$ of $\Spec(R)$, we denote by $\V^{-1}(\Phi)$ the full subcategory of $\CM(R)$ consisting of all objects $M$ of $\CM(R)$ such that $\V(M)$ is contained in $\Phi$.
\end{enumerate}
\end{dfn}

\begin{dfn}
\begin{enumerate}[(1)]
\item
For an object $M$ of $\lCM(R)$, we denote by $\lSupp(M)$ the {\em stable support} of $M$, namely, the set of prime ideals $\p$ of $R$ such that $M_\p$ is not isomorphic to $0$ in $\lCM(R_\p)$.
\item
For a full subcategory $\WW$ of $\lCM(R)$, we denote by $\lSupp(\WW)$ the {\em stable support} of $\WW$, namely, the union of $\lSupp(W)$ where $W$ runs through all objects in $\WW$.
\item
For a subset $\Phi$ of $\Spec(R)$, we denote by $\lSupp^{-1}(\Phi)$ the full subcategory of $\lCM(R)$ consisting of all objects $M$ of $\lCM(R)$ such that $\lSupp(M)$ is contained in $\Phi$.
\end{enumerate}
\end{dfn}

Note that $\V^{-1}(\Phi)$ and $\lSupp^{-1}(\Phi)$ are strict subcategories of $\CM(R)$ and $\lCM(R)$, respectively.

Here we recall some definitions and introduce some notation.
The ring $R$ is called an {\em abstract hypersurface} if the $\m$-adic completion of $R$ is isomorphic to $S/(f)$ for some complete regular local ring $S$ and an element $f\in S$.
The {\em singular locus} $\Sing(R)$ is defined as the set of prime ideals $\p$ of $R$ such that the local ring $R_\p$ is singular.
A subset $\Phi$ of $\Spec(R)$ is called {\em specialization-closed} if every prime ideal of $R$ containing some prime ideal in $\Phi$ belongs to $\Phi$.
The {\em punctured spectrum} of $R$ is by definition the set $\Spec(R)\setminus\{\m\}$.
For a nonnegative integer $n$, the $n$-th {\em syzygy} $\Omega^nM$ of a finitely generated $R$-module $M$ is defined to be the image of the $n$-th differential map in a minimal free resolution of $M$.

The following theorem is proved by Takahashi \cite{stcm}, which classifies thick subcategories of $\CM(R)$ and of $\lCM(R)$.

\begin{thm}\cite[Proposition 6.2 and Theorem 6.8]{stcm}\label{stcmthm}
Consider the following two cases.
\begin{enumerate}[\rm (1)]
\item
Let $R$ be an abstract hypersurface.
Set
\begin{align*}
\A & = \{\text{Specialization-closed subsets of }\Spec(R)\text{ contained in }\Sing(R)\},\\
\B & = \{\text{Thick subcategories of }\lCM(R)\},\\
\C & = \{\text{Thick subcategories of }\CM(R)\text{ containing }R\}.
\end{align*}
\item
Let $R$ be singular, and locally an abstract hypersurface on the punctured spectrum.
Set
\begin{align*}
\A & = \{\text{Nonempty specialization-closed subsets of }\Spec(R)\text{ contained in }\Sing(R)\},\\
\B & = \{\text{Thick subcategories of }\lCM(R)\text{ containing }\Omega^dk\},\\
\C & = \{\text{Thick subcategories of }\CM(R)\text{ containing }R\text{ and }\Omega^dk\}.
\end{align*}
\end{enumerate}
In each of the above two cases, one has the following commutative diagram of bijections.
$$
\xymatrix{
& & & & & & \A \ar@<0.5mm>[llllllddd]^{\lSupp^{-1}} \ar@<0.5mm>[llddd]^{\V^{-1}} \\
\\
\\
\B \ar@<0.5mm>[rrrrrruuu]^{\lSupp} \ar@<0.5mm>[rrrr]^{\nat} & & & & \C \ar@<0.5mm>[llll]^{\nat} \ar@<0.5mm>[rruuu]^{\V}
}
$$
Here, $\nat$ denotes a canonical map.
\end{thm}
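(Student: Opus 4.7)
The plan is to prove the diagram in three stages: verify well-definedness of the four arrows, establish the bijection $\nat\colon\C\leftrightarrow\B$, and establish the bijection $\lSupp\colon\B\leftrightarrow\A\colon\lSupp^{-1}$. Commutativity of the whole diagram follows from the observation that, for any $M\in\CM(R)$, the localization $M_\p$ is free over $R_\p$ if and only if it is isomorphic to $0$ in $\lCM(R_\p)$: indeed, $R_\p$ is Gorenstein and $M_\p$ is maximal Cohen-Macaulay, so free is the same as projective, which is the same as zero in the stable category. Thus $\V(M)=\lSupp(M)$ for every $M\in\CM(R)$, and the outer triangle of arrows composes correctly.

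For well-definedness, $\lSupp(\Y)\subseteq\Sing(R)$ by the Auslander-Buchsbaum-Serre characterization, and the stable support of a single object is specialization-closed because $M_\p=(M_\q)_\p$, so $M_\q$ free over $R_\q$ forces $M_\p$ free over $R_\p$. Conversely, $\lSupp^{-1}(\Phi)$ is closed under summands trivially and closed under triangles because for any triangle $L\to M\to N\to$ in $\lCM(R)$, the localization at $\p$ is a triangle in $\lCM(R_\p)$, so two of $L_\p, M_\p, N_\p\cong 0$ forces the third. The bijection $\C\leftrightarrow\B$ via $\nat$ is essentially formal: the inverse sends $\Y$ to $\{M\in\CM(R):\nat(M)\in\Y\}$, which contains every free module (in particular $R$) and is thick in $\CM(R)$, because short exact sequences of maximal Cohen-Macaulay modules descend to triangles in $\lCM(R)$, and conversely every triangle in $\lCM(R)$ is isomorphic to one coming from a short exact sequence in $\CM(R)$ after absorbing free summands.

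For the bijection $\A\leftrightarrow\B$, the easier composite $\lSupp\circ\lSupp^{-1}$ equals the identity on $\A$ by explicit realization: given $\p\in\Phi$, the module $N=\Omega^d(R/\p)$ lies in $\CM(R)$, its stable support equals $\{\q\in\Spec(R):\q\supseteq\p\}\cap\Sing(R)$, which is contained in $\Phi$ since $\Phi$ is specialization-closed and lies in $\Sing(R)$, while $\p\in\lSupp(N)$ because $R_\p$ is singular makes $k(\p)$ of infinite projective dimension over $R_\p$. In case (2), the hypothesis $\Omega^d k\in\Y$ ensures the closed point $\m$ is attained as well.

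The main obstacle is the opposite inclusion $\lSupp^{-1}(\lSupp(\Y))\subseteq\Y$: given $M\in\lCM(R)$ with $\lSupp(M)\subseteq\lSupp(\Y)$, we must deduce $M\in\Y$. Here the hypersurface hypothesis is essential. I would proceed by noetherian induction on the closed subset $\lSupp(M)\subseteq\Sing(R)$. Pick a specialization-maximal element $\p\in\lSupp(M)$ and choose $N\in\Y$ with $\p\in\lSupp(N)$; localizing at $\p$, both $M_\p$ and $N_\p$ are maximal Cohen-Macaulay over the hypersurface $R_\p$, whose minimal free resolutions are eventually $2$-periodic by Eisenbud's theorem. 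Using the matrix factorization structure together with lifting arguments, one constructs an object in $\thick(M,N)\subseteq\lCM(R)$ whose stable support is strictly smaller than $\lSupp(M)$, yet still contained in $\lSupp(\Y)$. Iterating and applying the induction hypothesis eventually places $M$ in $\Y$. In case (2), the same argument applies to every non-maximal $\p\in\lSupp(M)$, while the maximal ideal $\m$ is handled by the standing assumption $\Omega^d k\in\Y$.
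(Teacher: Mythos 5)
This statement is not proved in the paper at all: it is imported verbatim from \cite[Proposition 6.2 and Theorem 6.8]{stcm}, so you are in effect attempting to reprove the main theorem of that (long) companion paper. The formal parts of your proposal are fine and agree with the standard arguments: $\V(M)=\lSupp(M)$ for $M\in\CM(R)$ because a maximal Cohen--Macaulay module over a Gorenstein local ring is zero in the stable category exactly when it is free; the four maps are well defined for the reasons you give; the correspondence $\nat$ is the usual one (using that short exact sequences of Cohen--Macaulay modules induce the exact triangles of $\lCM(R)$ and conversely up to free summands); and $\lSupp(\lSupp^{-1}(\Phi))=\Phi$ is realized by $\Omega^d(R/\p)$, whose stable support is all of $V(\p)$ once $\p\in\Sing(R)$ (note $V(\p)\subseteq\Sing(R)$ automatically, since a localization of a regular local ring is regular).

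The genuine gap is the inclusion $\lSupp^{-1}(\lSupp(\Y))\subseteq\Y$, which is the entire content of the theorem, and your treatment of it does not close. You propose a noetherian induction in which, from $M$ and some $N\in\Y$ with overlapping support, one ``constructs an object in $\thick(M,N)$ whose stable support is strictly smaller than $\lSupp(M)$.'' Even granting that construction, the logic is inverted: knowing that some auxiliary object $M'\in\thick(M,N)$ of smaller support lies in $\Y$ does not place $M$ in $\Y$; for that you would need $M\in\thick(M',N)$, i.e.\ a triangle exhibiting $M$ as built from objects already known to lie in $\Y$. What is actually required is a statement of the shape: for maximal Cohen--Macaulay modules over an abstract hypersurface, $\lSupp(M)\subseteq\lSupp(N)$ forces $M\in\thick(N)$. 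This is the hypersurface-specific core of \cite{stcm} (a Hopkins--Neeman/Benson--Carlson--Rickard-type argument exploiting the tensor/Koszul structure available via matrix factorizations), and invoking ``the matrix factorization structure together with lifting arguments'' leaves it entirely as a black box. Similarly, in case (2) the assertion that the closed point is ``handled by $\Omega^dk\in\Y$'' hides the nontrivial fact that every Cohen--Macaulay module that is locally free on the punctured spectrum lies in $\thick(\Omega^dk)$. Until these two points are supplied, the proposal establishes only the easy half of the classification.
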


The above diagram will be extended in Theorem \ref{main} to a larger commutative diagram of bijections.

\section{Thick subcategories of $\DD(R)$}

In this section, we consider classifying thick subcategories of the bounded derived category of finitely generated $R$-modules.

Let $F:\AA\to\BB$ be a functor of categories.
For a strict full subcategory $\X$ of $\BB$ we denote by $F^{-1}\X$ the full subcategory of $\AA$ consisting of all objects $A$ of $\AA$ such that $FA$ belongs to $\X$.
Note that $F^{-1}\X$ is a strict subcategory of $\AA$.

We begin with two general results on thick subcategories of triangulated categories.

\begin{lem}\label{t/u}
Let $\T$ be a triangulated category and $\U$ a thick subcategory of $\T$.
Let $F:\T\to\T/\U$ be the localization functor.
Then there is a one-to-one correspondence
$$
\begin{CD}
\left\{
\begin{matrix}
\text{Thick subcategories}\\
\text{of }\T\text{ containing }\U
\end{matrix}
\right\}
\begin{matrix}
@>{f}>>\\
@<<{g}<
\end{matrix}
\left\{
\begin{matrix}
\text{Thick subcategories}\\
\text{of }\T/\U
\end{matrix}
\right\}
\end{CD}
$$
where $f$ is given by $\X\mapsto\X/\U$ and $g$ by $\Y\mapsto F^{-1}\Y$.
\end{lem}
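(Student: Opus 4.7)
The plan is to show that the two assignments $f(\X)=\X/\U$ and $g(\Y)=F^{-1}\Y$ are mutually inverse bijections. Since the paper requires subcategories to be strict, I interpret $\X/\U$ as the strict full subcategory of $\T/\U$ whose objects are those isomorphic in $\T/\U$ to some object of $\X$. Because the Verdier localization $F$ is the identity on objects, $F^{-1}\Y$ is literally the class of objects of $\T$ lying in $\Y$ viewed inside $\T/\U$. Well-definedness is then routine: since $F(U)\cong 0\in\Y$ for $U\in\U$, one has $\U\subseteq F^{-1}\Y$; exactness of $F$ and preservation of direct summands give thickness of $F^{-1}\Y$. Dually, an exact triangle in $\T/\U$ can be lifted (up to isomorphism in $\T/\U$) to one in $\T$ whose two given terms may be chosen in $\X$, and then the hypothesis $\U\subseteq\X$ together with thickness of $\X$ places the third term in $\X$, so its image lies in $\X/\U$; closure under direct summands is handled similarly.

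The heart of the argument is the identity $g\circ f=\mathrm{id}$. The inclusion $\X\subseteq F^{-1}(\X/\U)$ is tautological. For the reverse, take $X\in\T$ with $F(X)\in\X/\U$, so that $F(X)\cong F(X')$ in $\T/\U$ for some $X'\in\X$. Such an isomorphism in the Verdier quotient is presented by a fraction $X\xleftarrow{s}Z\xrightarrow{t}X'$ in which both $s$ and $t$ sit in exact triangles $Z\xrightarrow{s}X\to U_1\to$ and $Z\xrightarrow{t}X'\to U_2\to$ of $\T$ with $U_1,U_2\in\U$. Since $\U\subseteq\X$, the second triangle, together with $X',U_2\in\X$ and thickness of $\X$, forces $Z\in\X$; then the first triangle, with $Z,U_1\in\X$, forces $X\in\X$. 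The reverse identity $f\circ g=\mathrm{id}$ is essentially formal: for $Y\in\Y$ one has $F(Y)=Y\in\Y$, so $Y\in F^{-1}\Y$ and hence $Y\in F^{-1}\Y/\U$; the opposite containment follows from strictness of $\Y$, since every object of $F^{-1}\Y/\U$ is isomorphic in $\T/\U$ to an object whose $F$-image sits in $\Y$.

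The main obstacle is the lifting lemma for the Verdier quotient used above: unwinding an isomorphism $F(X)\cong F(X')$ into a roof whose cones lie in $\U$, and correctly reading off the exact triangles in $\T$ to which the thickness of $\X$ can be applied. The hypothesis $\U\subseteq\X$ is indispensable at exactly this step, since it is what allows the ``error terms'' $U_1, U_2\in\U$ to be absorbed into $\X$. Once this step is settled, the remaining verifications of thickness and of the inverse identities are formal consequences of the universal property of the localization.
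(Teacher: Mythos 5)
Your proposal is correct and follows essentially the same route as the paper: the decisive point in both is that an isomorphism $F(X)\cong F(X')$ in $\T/\U$ is presented by a roof whose two legs have cones in $\U$ (using thickness of $\U$), so that $\U\subseteq\X$ and thickness of $\X$ force $X\in\X$; this simultaneously gives $F^{-1}(\X/\U)=\X$ and the strictness/thickness of $\X/\U$ in $\T/\U$. The only cosmetic difference is that you define $\X/\U$ directly as the isomorphism-closed full subcategory of $\T/\U$ on the objects of $\X$, whereas the paper realizes it as a Verdier quotient fully faithfully embedded in $\T/\U$ and then proves it is closed under isomorphisms — the two descriptions agree by exactly the roof argument you give.
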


\begin{proof}
Let $\X$ be a thick subcategory of $\T$ containing $\U$, and let $\Y$ be a thick subcategory of $\T/\U$.
Let us show the lemma step by step.

(1) We can regard $\U$ as a thick subcategory of the triangulated category $\X$, and hence we can define the quotient category $\X/\U$.
Let $G:\X\to\X/\U$ be the localization functor.
The inclusion functor $\alpha:\X\to\T$ is uniquely extended to a triangle functor $\beta:\X/\U\to\T/\U$ such that $F\alpha=\beta G$; see \cite[Theorem 2.1.8]{N2}.
We can show that $\beta$ is fully faithful, and hence $\X/\U$ can be viewed as a full subcategory of $\T/\U$.

Let $M$ be an object of $\X/\U$ and let $\phi:M\to N$ be an isomorphism in $\T/\U$.
Then $M$ is an object of $\X$ and $N$ is an object of $\T$.
We describe the isomorphism $\phi$ as
$$
(M\overset{s}{\gets} L\overset{h}{\to} N)=(Fh)\cdot(Fs)^{-1}.
$$
The morphism $Fh=(L\overset{1}{\gets} L\overset{h}{\to} N)=\phi\cdot(Fs)$ is an isomorphism.
Taking an exact triangle $L\overset{h}{\to} N \to A \to$ in $\T$, we observe that $A$ belongs to $\U$ by the thickness of $\U$; see \cite[Proposition 2.1.35]{N2}.
There is an exact triangle $L\overset{s}{\to} M\to B \to$ in $\T$ with $B\in\U$, which implies that $L$ belongs to $\X$.
Hence $N$ also belongs to $\X$.
Thus $N$ is an object in $\X/\U$, and it follows that $\X/\U$ is a strict subcategory of $\T/\U$.
Also, we can prove that $\X/\U$ is closed under direct summands and exact triangles.
Consequently, $\X/\U$ is a thick subcategory of $\T/\U$.

(2) By \cite[Remark 2.1.10]{N2}, the subcategory $\U$ coincides with the {\em kernel} of $F$, namely, the full subcategory of $\T$ consisting of all objects $T$ of $\T$ such that $FT$ is isomorphic to $0$ in $\T/\U$.
Since $\Y$ contains $0$ and is strict, $F^{-1}\Y$ contains $\U$.
It is clear that $F^{-1}\Y$ is closed under direct summands.
Using the fact that $F$ is a triangle functor, we observe that $F^{-1}\Y$ is closed under exact triangles.
Thus, $F^{-1}\Y$ is a thick subcategory of $\T$ containing $\U$.

(3) It is easy to check that $gf(\X)=F^{-1}(\X/\U)=\X$ and $fg(\Y)=(F^{-1}\Y)/\U=\Y$ hold.
\end{proof}

Let $F:\AA\to\BB$ be a functor of categories.
For a full subcategory $\X$ of $\AA$ we denote by $F\X$ the full subcategory of $\BB$ consisting of all objects $B$ of $\BB$ such that $B\cong FX$ for some $X\in\X$.
Note that $F\X$ is a strict subcategory of $\BB$.

The proof of the following lemma is standard, and we omit it.

\begin{lem}\label{tt'}
Let $F:\T\to\T'$ be a triangle equivalence of triangulated categories.
Let $G:\T'\to\T$ be a quasi-inverse of $F$.
Then there is a one-to-one correspondence
$$
\begin{CD}
\left\{
\begin{matrix}
\text{Thick subcategories}\\
\text{of }\T
\end{matrix}
\right\}
\begin{matrix}
@>{f}>>\\
@<<{g}<
\end{matrix}
\left\{
\begin{matrix}
\text{Thick subcategories}\\
\text{of }\T'
\end{matrix}
\right\}
\end{CD}
$$
where $f$ is given by $\X\mapsto F\X$ and $g$ by $\Y\mapsto G\Y$.
\end{lem}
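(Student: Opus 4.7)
The plan is to verify the three things any bijection-style lemma of this form requires: that $F\X$ is indeed thick whenever $\X$ is, that $G\Y$ is indeed thick whenever $\Y$ is, and that the assignments $f$ and $g$ are mutually inverse. Throughout, I will lean on the fact that $F$ and $G$ are triangle functors that are quasi-inverse to each other, so that $GF\cong\mathrm{id}_\T$ and $FG\cong\mathrm{id}_{\T'}$, together with the strictness (i.e.\ closure under isomorphism) of all subcategories in sight.

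For the first step, I would take a thick subcategory $\X$ of $\T$ and check the three defining properties for $F\X$. Nonemptiness and strictness are immediate from the definition of $F\X$ (it contains $F0\cong 0$, and is built by closing under isomorphism). For direct summands, given $B\cong B'\oplus B''$ with $B\in F\X$, say $B\cong FX$ with $X\in\X$, I would apply $G$ to get $GB'\oplus GB''\cong GB\cong GFX\cong X$, use that $\X$ is closed under summands to conclude $GB',GB''\in\X$, and then use $FG\cong\mathrm{id}_{\T'}$ to get $B'\cong FGB'$ and $B''\cong FGB''$ in $F\X$. Closure under exact triangles is analogous: apply $G$ (a triangle functor) to transport the exact triangle to $\T$, observe that the two hypothesized vertices land in $\X$ by the same $GF\cong\mathrm{id}$ argument, invoke thickness of $\X$ to put the third vertex in $\X$, and then apply $F$ to move back to $\T'$.

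The second step, showing that $G\Y$ is thick in $\T$ whenever $\Y$ is thick in $\T'$, is the same argument with the roles of $F$ and $G$ exchanged, since $G$ is itself a triangle equivalence with quasi-inverse $F$.

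Finally, for the mutual-inverse property, I would show $gf(\X)=\X$ by a direct chase: an object $A$ lies in $G(F\X)$ iff $A\cong GB$ for some $B\cong FX$ with $X\in\X$, which forces $A\cong GFX\cong X$, so $A\in\X$ by strictness; and conversely, any $X\in\X$ satisfies $X\cong GFX\in G(F\X)$. The identity $fg(\Y)=\Y$ follows symmetrically from $FG\cong\mathrm{id}_{\T'}$. There is no real obstacle here; the only slightly delicate point is being careful that strictness is what lets us promote isomorphisms (coming from the natural isomorphisms $GF\cong\mathrm{id}$ and $FG\cong\mathrm{id}$) to genuine membership in the subcategory, and that is precisely why $F\X$ and $G\Y$ are defined as the closures of the essential images.
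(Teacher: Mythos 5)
Your proof is correct and is precisely the standard argument the paper has in mind: the paper omits the proof of this lemma entirely, remarking only that it is standard. Your verification that $F\X$ and $G\Y$ are thick and that $GF\cong\mathrm{id}_\T$, $FG\cong\mathrm{id}_{\T'}$ together with strictness give $gf(\X)=\X$ and $fg(\Y)=\Y$ fills that gap exactly as intended.
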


We denote by $\DD(R)$ the bounded derived category of $\mod(R)$, and by $\P(R)$ the full subcategory of $\DD(R)$ consisting of all {\em perfect} complexes, namely, bounded complexes of finitely generated projective (equivalently, free) $R$-modules.

\begin{rem}\label{fact}
It is well known that $\P(R)$ is the smallest thick subcategory of $\DD(R)$ containing $R$.
\end{rem}

\begin{dfn}\label{buconst}
By virtue of \cite[Theorem 4.4.1]{B}, the assignment $M\mapsto M$ makes a triangle equivalence
\begin{equation}\label{buch}
\lCM(R) \overset{\cong}{\longrightarrow} \DD(R)/\P(R).
\end{equation}
We recall the construction of a quasi-inverse $Q_R$ of this functor which is stated in \cite[(4.5)]{B}.
Let $X$ be an object of $\DD(R)/\P(R)$.
Then $X$ is a bounded complex of finitely generated $R$-modules.
Take a free resolution
$$
F=(\cdots \overset{f_{i+1}}{\longrightarrow} F_i \overset{f_i}{\longrightarrow} F_{i-1} \overset{f_{i-1}}{\longrightarrow} \cdots)
$$
of $X$.
Fix an integer $n\ge\sup X+\dim R$, where $\sup X=\sup\{\, i\in\ZZ\mid H_i(X)\ne 0\,\}$.
Let $N$ be the image of $f_n$.
The $R$-module $N$ is Cohen-Macaulay, whence there is an exact sequence
$$
\cdots \overset{g_{i+1}}{\longrightarrow} G_i \overset{g_i}{\longrightarrow} G_{i-1} \overset{g_{i-1}}{\longrightarrow} \cdots
$$
of finitely generated free $R$-modules with $G_i=F_i$ for $i\ge n$, $g_i=f_i$ for $i\ge n+1$ and the image of $g_n$ being $N$.
Then $Q_R(X)$ is defined to be the image of $g_0$.
\end{dfn}

\begin{lem}\label{QX}
For $X\in\DD(R)/\P(R)$ and $\p\in\Spec(R)$, one has an isomorphism
$$
(Q_R(X))_\p\cong Q_{R_\p}(X_\p)
$$
in $\lCM(R_\p)$.
\end{lem}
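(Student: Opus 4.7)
The plan is to choose a specific construction of $Q_R(X)$ following Definition \ref{buconst}, localize the entire data at $\p$, and verify that the result is itself a legitimate instance of the construction of $Q_{R_\p}(X_\p)$. Since $Q_{R_\p}$ is defined only up to isomorphism in the stable category, this identification will yield the desired isomorphism in $\lCM(R_\p)$.

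First I fix a free resolution $F=(\cdots\to F_i\overset{f_i}{\to}F_{i-1}\to\cdots)$ of $X$ over $R$, an integer $n\ge\sup X+\dim R$, the module $N=\operatorname{Im}(f_n)$, which is Cohen-Macaulay, and an exact sequence $\cdots\to G_i\overset{g_i}{\to} G_{i-1}\to\cdots$ of finitely generated free $R$-modules that coincides with $F$ in degrees $\ge n$ and has $\operatorname{Im}(g_n)=N$, so that $Q_R(X)=\operatorname{Im}(g_0)$. Since localization at $\p$ is exact, $(F)_\p\to X_\p$ is a free resolution of $X_\p$ over $R_\p$ and $(G)_\p$ is a doubly infinite exact sequence of finitely generated free $R_\p$-modules, matching $(F)_\p$ in degrees $\ge n$ and with $\operatorname{Im}((g_n)_\p)=N_\p$.

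Next I check that these localized data meet the hypotheses of Definition \ref{buconst} for $X_\p$ over $R_\p$. The isomorphism $H_i(X_\p)\cong H_i(X)_\p$ gives $\sup X_\p\le\sup X$, and $\dim R_\p\le\dim R$, so $n\ge\sup X_\p+\dim R_\p$. Moreover, $N_\p$ is maximal Cohen-Macaulay over $R_\p$: this is a standard fact, since $R$ is Gorenstein (hence Cohen-Macaulay) and maximal Cohen-Macaulay modules over a Cohen-Macaulay local ring localize to maximal Cohen-Macaulay modules. Therefore the localized data are valid input to the construction of $Q_{R_\p}(X_\p)$, and they produce
$$
Q_{R_\p}(X_\p)=\operatorname{Im}((g_0)_\p)=(\operatorname{Im}(g_0))_\p=(Q_R(X))_\p.
$$
Since any two outputs of the construction represent the same object up to isomorphism in $\lCM(R_\p)$, this gives the claimed isomorphism.

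The main obstacle is essentially bookkeeping: one must check that the construction commutes with localization, which reduces to the two standard facts $\sup X_\p+\dim R_\p\le\sup X+\dim R$ and that $N_\p$ remains maximal Cohen-Macaulay over $R_\p$. Neither uses the hypersurface hypothesis, only that $R$ is Gorenstein (or more generally Cohen-Macaulay), so the lemma holds in this broader setting.
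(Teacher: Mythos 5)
Your proposal is correct and is essentially the paper's own argument spelled out in more detail: the paper likewise fixes the data of Definition \ref{buconst}, notes $n\ge\sup X+\dim R\ge\sup X_\p+\dim R_\p$, that $F_\p$ resolves $X_\p$ and that $N_\p$ is Cohen--Macaulay over $R_\p$, and concludes that the construction commutes with localization. Your added observation that only the Gorenstein (indeed Cohen--Macaulay) hypothesis is used is accurate but not a departure from the paper's route.
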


\begin{proof}
We use the notation of Definition \ref{buconst}.
We have $n\ge\sup X+\dim R\ge\sup X_\p+\dim R_\p$.
Since $F_\p$ is a free $R_\p$-resolution of $X_\p$ and $N_\p$ is a Cohen-Macaulay $R_\p$-module, the assertion follows from the construction of the functor $Q_R$ which we observed in Definition \ref{buconst}.
\end{proof}

Now we make the definitions of the infinite projective dimension loci of an object and a full subcategory of $\DD(R)$.

\begin{dfn}
\begin{enumerate}[(1)]
\item
For an object $C$ of $\DD(R)$, we denote by $\W(C)$ the set of prime ideals $\p$ of $R$ such that the $R_\p$-complex $C_\p$ does not belong to $\P(R_\p)$, namely, $C_\p$ has infinite projective dimension as an $R_\p$-complex.
We call this the {\em infinite projective dimension locus} of $C$.
\item
For a full subcategory $\X$ of $\DD(R)$, we denote by $\W(\X)$ the union of $\W(X)$ where $X$ runs through all objects in $\X$.
We call this the {\em infinite projective dimension locus} of $\X$.
\item
For a subset $\Phi$ of $\Spec(R)$, we denote by $\W^{-1}(\Phi)$ the full subcategory of $\DD(R)$ consisting of all objects $C$ of $\DD(R)$ such that $\W(C)$ is contained in $\Phi$.
\end{enumerate}
\end{dfn}

Note that $\W^{-1}(\Phi)$ is a strict subcategory of $\DD(R)$.

We give some basic properties of infinite projective dimension loci in the next two lemmas.

\begin{lem}\label{bwx}
Let $\X$ be a full subcategory of $\DD(R)$.
Then $\W(\X)$ is a specialization-closed subset of $\Spec(R)$ contained in $\Sing(R)$.
\end{lem}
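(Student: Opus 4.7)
The plan is to reduce immediately to the case of a single object: since $\W(\X)=\bigcup_{X\in\X}\W(X)$ and both ``specialization-closed'' and ``contained in $\Sing(R)$'' are properties preserved under arbitrary unions, it suffices to prove that $\W(C)$ is specialization-closed and contained in $\Sing(R)$ for every single $C\in\DD(R)$.

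For the containment $\W(C)\subseteq\Sing(R)$, I would argue by contrapositive: if $\p\in\Spec(R)\setminus\Sing(R)$, then $R_\p$ is a regular local ring, so every finitely generated $R_\p$-module has finite projective dimension. Hence the bounded complex $C_\p$ of finitely generated $R_\p$-modules is quasi-isomorphic to a bounded complex of finitely generated free $R_\p$-modules, which means $C_\p\in\P(R_\p)$ and therefore $\p\notin\W(C)$.

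For specialization-closedness, the key observation is that perfectness localizes: if $D\in\P(S)$ for some commutative ring $S$, then $D\otimes_S T\in\P(T)$ for any $S$-algebra $T$. I would argue by contrapositive again. Suppose $\p\subseteq\q$ and $\q\notin\W(C)$, i.e., $C_\q\in\P(R_\q)$. Then $C_\q$ is quasi-isomorphic to a bounded complex of finitely generated free $R_\q$-modules. Since $\p\subseteq\q$, localization at $\p R_\q$ identifies $(R_\q)_{\p R_\q}$ with $R_\p$, and further localization sends $C_\q$ to $C_\p$ and turns the bounded free $R_\q$-complex into a bounded free $R_\p$-complex. Hence $C_\p\in\P(R_\p)$, giving $\p\notin\W(C)$.

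There is no real obstacle here; the statement is essentially a repackaging of the two elementary facts that (a)~over a regular local ring every bounded complex of finitely generated modules is perfect, and (b)~perfectness is stable under localization. The only care required is the identification of iterated localizations, namely $(R_\q)_{\p R_\q}\cong R_\p$ when $\p\subseteq\q$, which ensures that the argument for specialization-closedness actually produces $C_\p$ from $C_\q$.
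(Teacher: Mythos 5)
Your proposal is correct and follows essentially the same route as the paper's proof, which likewise rests on the two facts that finiteness of projective dimension is preserved under further localization (giving specialization-closedness via the contrapositive) and that every bounded complex of finitely generated modules over a regular local ring is perfect (giving the containment in $\Sing(R)$). The extra care you take with the identification $(R_\q)_{\p R_\q}\cong R_\p$ is fine but not a point the paper needs to belabor.
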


\begin{proof}
Let $\p$ be a prime ideal of $R$ and $M$ a finitely generated $R$-module.
If the $R_\p$-module $M_\p$ has finite projective dimension, then so does the $R_\q$-module $M_\q$ for every prime ideal $\q$ contained in $\p$.
On the other hand, over a regular local ring every bounded complex of finitely generated modules has finite projective dimension.
The assertion follows from these.
\end{proof}

\begin{lem}\label{3pr}
\begin{enumerate}[\rm (1)]
\item
For an object $X$ of $\P(R)$, one has $\W(X)=\emptyset$.
\item
For an object $X$ of $\DD(R)$, one has $\W(X)=\W(\sus X)$, where $\sus$ denotes the suspension functor.
\item
For an exact triangle $X \to Y \to Z \to$ in $\DD(R)$, one has $\W(X)\subseteq\W(Y)\cup\W(Z)$.
\item
For objects $X,Y\in\DD(R)$, one has $\W(X\oplus Y)=\W(X)\cup\W(Y)$.
\end{enumerate}
\end{lem}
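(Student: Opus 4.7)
The plan is to reduce each part to a single underlying fact: for every prime $\p$, localization at $\p$ is an exact functor $\DD(R)\to\DD(R_\p)$ sending $\P(R)$ into $\P(R_\p)$, and $\P(R_\p)$ is a thick subcategory of $\DD(R_\p)$. Once this is in hand, all four assertions become formal consequences of thickness of $\P(R_\p)$ applied objectwise at each prime.

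For (1), I would observe that for $X\in\P(R)$ and any prime $\p$, the localization $X_\p$ is a bounded complex of finitely generated free $R_\p$-modules, hence lies in $\P(R_\p)$; thus $\p\notin\W(X)$, giving $\W(X)=\emptyset$.

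For (2), I would argue that the suspension functor on $\DD(R_\p)$ restricts to an autoequivalence of $\P(R_\p)$, so $X_\p\in\P(R_\p)$ if and only if $(\sus X)_\p=\sus(X_\p)\in\P(R_\p)$; this yields $\W(X)=\W(\sus X)$. For (3), given a prime $\p\notin\W(Y)\cup\W(Z)$, localization at $\p$ produces an exact triangle $X_\p\to Y_\p\to Z_\p\to$ in $\DD(R_\p)$ with the last two terms in $\P(R_\p)$; thickness of $\P(R_\p)$ forces $X_\p\in\P(R_\p)$, so $\p\notin\W(X)$, proving the desired inclusion. For (4), the inclusion $\W(X)\cup\W(Y)\subseteq\W(X\oplus Y)$ follows because $\P(R_\p)$ is closed under direct summands (so if either $X_\p$ or $Y_\p$ has infinite projective dimension, then so does $X_\p\oplus Y_\p=(X\oplus Y)_\p$), and the reverse inclusion follows because $\P(R_\p)$ is closed under finite direct sums.

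There is no real obstacle here: every clause is a transparent application of thickness of $\P(R_\p)$ combined with the exactness of $(-)_\p$ on $\DD(R)$. The only mild bookkeeping point is to use that localization commutes with suspension and with taking direct summands, but both are standard. Consequently the proof will be a short, essentially formal verification carried out one prime at a time.
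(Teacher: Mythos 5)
Your proposal is correct, and it is precisely the ``straightforward'' verification the paper has in mind: the paper omits the proof of this lemma entirely, remarking only that it is shown straightforwardly. Reducing all four parts to the thickness of $\P(R_\p)$ together with the exactness of localization is exactly the intended argument, and each of your four verifications is complete.
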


This lemma is shown straightforwardly.

The result below is a direct consequence of Lemma \ref{3pr}.

\begin{lem}\label{winv}
Let $\Phi$ be a subset of $\Spec(R)$.
Then $\W^{-1}(\Phi)$ is a thick subcategory of $\DD(R)$ containing $\P(R)$.
\end{lem}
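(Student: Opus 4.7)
The plan is to verify all the axioms defining a thick subcategory of $\DD(R)$, drawing every ingredient from Lemma \ref{3pr}. Throughout, fix the subset $\Phi\subseteq\Spec(R)$.

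First I would check strictness and nonemptiness. If $X\cong Y$ in $\DD(R)$, then for each $\p\in\Spec(R)$ we have $X_\p\cong Y_\p$ in $\DD(R_\p)$, so $X_\p$ is perfect over $R_\p$ if and only if $Y_\p$ is; hence $\W(X)=\W(Y)$, and $\W^{-1}(\Phi)$ is a strict full subcategory. By Lemma \ref{3pr}(1), every perfect complex $X\in\P(R)$ satisfies $\W(X)=\emptyset\subseteq\Phi$, so $\P(R)\subseteq\W^{-1}(\Phi)$; in particular the zero object lies in $\W^{-1}(\Phi)$, which is therefore nonempty, and the containment $\P(R)\subseteq\W^{-1}(\Phi)$ demanded by the lemma is established.

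Next I would handle closure under direct summands. If $X\in\W^{-1}(\Phi)$ and $Y$ is a direct summand of $X$ in $\DD(R)$, then $X\cong Y\oplus Z$ for some $Z\in\DD(R)$; Lemma \ref{3pr}(4) (together with strictness) gives $\W(Y)\subseteq\W(Y)\cup\W(Z)=\W(X)\subseteq\Phi$, so $Y\in\W^{-1}(\Phi)$.

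Finally I would verify closure under exact triangles. Suppose $L\to M\to N\to$ is an exact triangle in $\DD(R)$ and two of $L,M,N$ belong to $\W^{-1}(\Phi)$. Rotating this triangle produces the exact triangles $M\to N\to\sus L\to$ and $\sus^{-1}N\to L\to M\to$, so Lemma \ref{3pr}(3) applied to each rotation, combined with Lemma \ref{3pr}(2) which identifies $\W(\sus^{\pm 1}(-))=\W(-)$, yields the three inclusions
\[
\W(L)\subseteq\W(M)\cup\W(N),\quad \W(M)\subseteq\W(L)\cup\W(N),\quad \W(N)\subseteq\W(L)\cup\W(M).
\]
In whichever case two of $\W(L),\W(M),\W(N)$ are contained in $\Phi$, the corresponding inclusion forces the third to be contained in $\Phi$ as well, so the third object also lies in $\W^{-1}(\Phi)$.

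There is no real obstacle: Lemma \ref{3pr} is tailored to make every thickness axiom immediate, and the only minor point requiring care is invoking Lemma \ref{3pr}(2) when rotating the triangle, since the inclusion in Lemma \ref{3pr}(3) is stated only for the first term of a triangle.
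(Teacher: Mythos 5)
Your proof is correct and is exactly the argument the paper intends: the paper simply states that the lemma is a direct consequence of Lemma \ref{3pr}, and your write-up supplies the routine verification (strictness, containment of $\P(R)$ via part (1), direct summands via part (4), and closure under triangles via rotation together with parts (2) and (3)). No gaps.
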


We state here three lemmas; the first and third ones will play an important role in the proof of the main result of this section.

\begin{lem}\label{ws}
Let $\X$ be a thick subcategory of $\DD(R)$ containing $\P(R)$.
Then the equality $\W(\X)=\lSupp(Q_R(\X/\P(R)))$ holds.
\end{lem}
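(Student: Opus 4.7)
The plan is to reduce the set-theoretic equality to a pointwise one: show that $\lSupp(Q_R(X))=\W(X)$ for each object $X$ of $\X$, then take unions. The proof will combine the localization compatibility of $Q_R$ (Lemma \ref{QX}) with the fact that the zero objects of a Verdier quotient are exactly the objects of the subcategory one quotients by, which was already invoked inside the proof of Lemma \ref{t/u}.

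For the pointwise statement, fix $X\in\X$ and $\p\in\Spec(R)$. By Lemma \ref{QX}, $(Q_R(X))_\p\cong Q_{R_\p}(X_\p)$ in $\lCM(R_\p)$, so $\p\in\lSupp(Q_R(X))$ iff $Q_{R_\p}(X_\p)\not\cong 0$ in $\lCM(R_\p)$. Because $Q_{R_\p}$ is a quasi-inverse of the Buchweitz triangle equivalence \eqref{buch} applied to $R_\p$, this is in turn equivalent to $X_\p\not\cong 0$ in $\DD(R_\p)/\P(R_\p)$. By the standard identification of the kernel of a Verdier localization with the subcategory being quotiented by, $X_\p\cong 0$ in $\DD(R_\p)/\P(R_\p)$ iff $X_\p\in\P(R_\p)$, i.e., iff $\p\notin\W(X)$. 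Chaining these equivalences yields $\lSupp(Q_R(X))=\W(X)$.

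To pass to the full subcategories, note that every object of $\X/\P(R)$ (viewed as a subcategory of $\DD(R)/\P(R)$) is, up to isomorphism in $\DD(R)/\P(R)$, of the form $X$ for some $X\in\X$, and that $Q_R$ sends isomorphic objects to isomorphic objects; since $\lSupp$ is invariant under isomorphism in $\lCM(R)$, the essential image $Q_R(\X/\P(R))$ contributes no primes beyond those coming from $\{Q_R(X):X\in\X\}$. Taking unions,
$$
\lSupp(Q_R(\X/\P(R)))=\bigcup_{X\in\X}\lSupp(Q_R(X))=\bigcup_{X\in\X}\W(X)=\W(\X).
$$
There is no real obstacle here: the argument is entirely bookkeeping, built on Lemma \ref{QX} and the kernel description of Verdier quotients. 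The only place requiring mild care is verifying that the $\lSupp$ of the essential image really coincides with the union over a choice of representatives, but this is immediate from isomorphism invariance.
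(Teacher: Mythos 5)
Your argument is correct and is essentially the paper's own proof: both rest on the chain of equivalences combining Lemma \ref{QX}, the fact that $Q_{R_\p}$ is (a quasi-inverse of) an equivalence, and the identification of $\P(R_\p)$ with the kernel of the localization $\DD(R_\p)\to\DD(R_\p)/\P(R_\p)$. Your reorganization into a pointwise identity $\lSupp(Q_R(X))=\W(X)$ followed by taking unions is only a cosmetic difference from the paper's direct chain of equivalences quantified over $X\in\X$.
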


\begin{proof}
Let $\p$ be a prime ideal of $R$.
Then we have
\begin{align*}
& \p\in\lSupp(Q_R(\X/\P(R))) \\
\Longleftrightarrow\quad & Q_{R_\p}(X_\p)\cong(Q_R(X))_\p\not\cong 0\text{ in }\lCM(R_\p)\text{ for some }X\in\X \\
\Longleftrightarrow\quad & X_\p\not\cong 0\text{ in }\DD(R_\p)/\P(R_\p)\text{ for some }X\in\X \\
\Longleftrightarrow\quad & X_\p\notin\P(R_\p)\text{ for some }X\in\X \\
\Longleftrightarrow\quad & \p\in\W(\X).
\end{align*}
Here, Lemma \ref{QX} is applied in the second statement.
Since the functor $Q_{R_\p}$ is an equivalence of additive categories, we obtain the second equivalence.
The third equivalence follows from the fact that $\P(R_\p)$ is the kernel of the localization functor $\DD(R_\p)\to\DD(R_\p)/\P(R_\p)$; see \cite[Remark 2.1.10]{N2}.
\end{proof}

For each $M\in\mod(R)$, we define the $R$-complex
$$
\Delta M=(\cdots \to 0 \to 0 \to M \to 0 \to 0 \to \cdots)
$$
with $M$ being in degree zero.
It is well known and easily observed that the assignment $M\mapsto \Delta M$ makes a fully faithful functor $\mod(R)\to\DD(R)$.

\begin{lem}\label{phi}
For every $\p\in\Sing R$, one has $\W(\Delta(R/\p))=V(\p)$.
\end{lem}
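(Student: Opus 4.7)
The plan is to prove the two set inclusions separately. Note that $\W(\Delta(R/\p))$ consists of those $\q \in \Spec R$ for which the localized complex $\Delta((R/\p)_\q)$ has infinite projective dimension over $R_\q$, since localization commutes with the functor $\Delta$.

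For the inclusion $\W(\Delta(R/\p)) \subseteq V(\p)$, I would take $\q \notin V(\p)$, i.e., $\q \not\supseteq \p$. Then $\p R_\q = R_\q$, so $(R/\p)_\q = 0$, which makes $\Delta(R/\p)_\q$ the zero complex, hence trivially perfect. Thus $\q \notin \W(\Delta(R/\p))$.

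For the reverse inclusion $V(\p) \subseteq \W(\Delta(R/\p))$, I would argue by contradiction. Suppose $\q \in V(\p)$ but $\Delta(R/\p)_\q \in \P(R_\q)$, i.e., $(R/\p)_\q$ has finite projective dimension as an $R_\q$-module. Since finite projective dimension is preserved under further localization, localizing at the prime $\p R_\q$ of $R_\q$ yields that $((R/\p)_\q)_{\p R_\q} = (R/\p)_\p = k(\p)$ has finite projective dimension over $(R_\q)_{\p R_\q} = R_\p$. By the Auslander-Buchsbaum-Serre theorem, this forces $R_\p$ to be regular, contradicting the hypothesis that $\p \in \Sing R$. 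Hence $\q \in \W(\Delta(R/\p))$.

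The only nontrivial step is the reverse inclusion, and even there the main input is the well-known Auslander-Buchsbaum-Serre characterization of regularity together with the stability of finite projective dimension under localization; neither requires new work. There is no real obstacle here, and the proof should be essentially two short paragraphs.
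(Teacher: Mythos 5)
Your proof is correct and follows essentially the same route as the paper: the easy inclusion via $(R/\p)_\q=0$ for $\q\not\supseteq\p$, and the reverse inclusion via the Auslander--Buchsbaum--Serre characterization of regularity applied to $\kappa(\p)$ over $R_\p$. The only cosmetic difference is that you inline the descent of finite projective dimension from $\q$ down to $\p$, whereas the paper packages that step as the statement that $\W(\Delta(R/\p))$ is specialization-closed (its Lemma \ref{bwx}) and then only checks $\p\in\W(\Delta(R/\p))$.
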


\begin{proof}
The set $\W(\Delta(R/\p))$ consists of the prime ideals $\q$ of $R$ such that the $R_\q$-module $R_\q/\p R_\q$ has infinite projective dimension.
Hence $\W(\Delta(R/\p))$ is a subset of $V(\p)$.
Since $R_\p$ is singular, the residue field $\kappa(\p)=R_\p/\p R_\p$ has infinite projective dimension as an $R_\p$-module.
This means that $\p$ belongs to $\W(\Delta(R/\p))$.
As $\W(\Delta(R/\p))$ is specialization-closed by Lemma \ref{bwx}, it contains $V(\p)$.
Hence we have $\W(\Delta(R/\p))=V(\p)$.
\end{proof}

\begin{lem}\label{1335}
Let $\Phi$ be a specialization-closed subset of $\Spec(R)$ contained in $\Sing(R)$.
Then the equality $\Phi=\W(\W^{-1}(\Phi))$ holds.
\end{lem}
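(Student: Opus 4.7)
The plan is to prove the equality by showing both containments, using the groundwork laid by Lemmas \ref{bwx} and \ref{phi}.

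First I would dispose of the easy containment $\W(\W^{-1}(\Phi)) \subseteq \Phi$. This is essentially tautological: every object $X$ of $\W^{-1}(\Phi)$ satisfies $\W(X) \subseteq \Phi$ by the very definition of $\W^{-1}(\Phi)$, and $\W(\W^{-1}(\Phi))$ is the union of these sets $\W(X)$ as $X$ runs over $\W^{-1}(\Phi)$, so it is contained in $\Phi$.

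For the nontrivial containment $\Phi \subseteq \W(\W^{-1}(\Phi))$, the natural move is to exhibit, for each $\p \in \Phi$, a concrete object of $\W^{-1}(\Phi)$ whose infinite projective dimension locus contains $\p$. The canonical candidate is the stalk complex $\Delta(R/\p)$. Since $\Phi \subseteq \Sing(R)$, the prime $\p$ lies in $\Sing(R)$, so Lemma \ref{phi} applies and gives $\W(\Delta(R/\p)) = V(\p)$. Because $\Phi$ is specialization-closed and contains $\p$, we have $V(\p) \subseteq \Phi$, which means $\Delta(R/\p) \in \W^{-1}(\Phi)$. Consequently $\p \in V(\p) = \W(\Delta(R/\p)) \subseteq \W(\W^{-1}(\Phi))$, as desired.

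There is no real obstacle here; the work was already done in setting up Lemma \ref{phi} (which provides the prime-by-prime witness) and Lemma \ref{bwx} (which shows $\W$ always lands in specialization-closed subsets of $\Sing(R)$, explaining why the hypotheses on $\Phi$ are both natural and necessary). The only mild subtlety is remembering to invoke the specialization-closedness of $\Phi$ when passing from $\p \in \Phi$ to $V(\p) \subseteq \Phi$, which is exactly what lets the candidate $\Delta(R/\p)$ sit inside $\W^{-1}(\Phi)$.
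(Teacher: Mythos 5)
Your proposal is correct and follows exactly the same route as the paper's own proof: the trivial containment by definition of $\W^{-1}(\Phi)$, and for the converse the witness $\Delta(R/\p)$ together with Lemma \ref{phi} and the specialization-closedness of $\Phi$. Nothing to add.
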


\begin{proof}
It is clear that $\Phi$ contains $\W(\W^{-1}(\Phi))$.
Let $\p\in\Phi$.
As $\Phi$ is contained in $\Sing(R)$, Lemma \ref{phi} gives an equality $\W(\Delta(R/\p))=V(\p)$.
Since $\Phi$ is specialization-closed, $V(\p)$ is contained in $\Phi$.
Hence $\Delta(R/\p)$ is a subset of $\W^{-1}(\Phi)$, and we obtain $\p\in V(\p)=\W(\Delta(R/\p))\subseteq\W(\W^{-1}(\Phi))$.
\end{proof}

Now we state and prove the main result of this section.

\begin{thm}\label{thmder}
\begin{enumerate}[\rm (1)]
\item
Let $R$ be an abstract hypersurface.
Then one has the following one-to-one correspondence:
$$
\begin{CD}
\left\{
\begin{matrix}
\text{Thick subcategories of }\DD(R)\\
\text{containing }R
\end{matrix}
\right\}
\begin{matrix}
@>{\W}>>\\
@<<{\W^{-1}}<
\end{matrix}
\left\{
\begin{matrix}
\text{Specialization-closed subsets}\\
\text{of }\Spec(R)\text{ contained in }\Sing(R)
\end{matrix}
\right\}.
\end{CD}
$$
\item
Let $R$ be singular, and be locally an abstract hypersurface on the punctured spectrum.
Then one has the following one-to-one correspondence:
$$
\begin{CD}
\left\{
\begin{matrix}
\text{Thick subcategories of }\DD(R)\\
\text{containing }R\text{ and }k
\end{matrix}
\right\}
\begin{matrix}
@>{\W}>>\\
@<<{\W^{-1}}<
\end{matrix}
\left\{
\begin{matrix}
\text{Nonempty specialization-closed subsets}\\
\text{of }\Spec(R)\text{ contained in }\Sing(R)
\end{matrix}
\right\}.
\end{CD}
$$
\end{enumerate}
\end{thm}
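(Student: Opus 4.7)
The plan is to chain together three bijections and identify the composite with $(\W,\W^{-1})$, using the machinery already set up. Concretely, I would string together Lemma~\ref{t/u} (with $\T=\DD(R)$ and $\U=\P(R)$), Lemma~\ref{tt'} applied to the Buchweitz equivalence~\eqref{buch}, and the appropriate case of Theorem~\ref{stcmthm}.

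For part~(1), the first step relies on Remark~\ref{fact}, which identifies thick subcategories of $\DD(R)$ containing $R$ with thick subcategories containing $\P(R)$; Lemma~\ref{t/u} then bijects these with thick subcategories of $\DD(R)/\P(R)$. Lemma~\ref{tt'} applied to~\eqref{buch} in turn bijects these with thick subcategories of $\lCM(R)$, and Theorem~\ref{stcmthm}(1) bijects those with specialization-closed subsets of $\Spec(R)$ contained in $\Sing(R)$. It remains to verify that the forward composite is precisely $\W$. For $\X$ a thick subcategory containing $\P(R)$, the composite sends $\X\mapsto\X/\P(R)\mapsto Q_R(\X/\P(R))\mapsto\lSupp(Q_R(\X/\P(R)))$, which equals $\W(\X)$ by Lemma~\ref{ws}. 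The inverse must then be $\W^{-1}$: having a bijection already, it suffices to check $\W\circ\W^{-1}=\mathrm{id}$, which is Lemma~\ref{1335}.

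For part~(2), I would additionally track how the constraint ``contains $k$'' propagates through the chain. Under the first bijection it passes to ``contains $k$'' in $\DD(R)/\P(R)$; the triangles from a minimal free resolution of $k$ collapse there (each free $F_i$ becomes zero), yielding $k\cong\sus^d\Omega^dk$ in $\DD(R)/\P(R)$. Since thick subcategories are closed under $\sus$, this is equivalent to ``contains $\Omega^dk$''. The Buchweitz equivalence carries this to the same constraint in $\lCM(R)$, and Theorem~\ref{stcmthm}(2) then matches such subcategories with \emph{nonempty} specialization-closed subsets of $\Sing(R)$.

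The main obstacle, such as it is, is really the matching of the composite of three bijections with $(\W,\W^{-1})$; but the key content---the translation between $\W(\X)$ and $\lSupp(Q_R(\X/\P(R)))$---has already been isolated in Lemma~\ref{ws}, and Lemma~\ref{1335} disposes of one of the two composition identities for free. What remains is essentially bookkeeping: chasing objects through the Verdier localization, the Buchweitz equivalence, and the stable support construction, and checking that the constraint about $k$ in part~(2) corresponds cleanly to a constraint about $\Omega^d k$.
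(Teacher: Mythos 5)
Your proposal is correct and follows essentially the same route as the paper: the paper's proof composes the same three bijections (Lemma \ref{t/u} via Remark \ref{fact}, Lemma \ref{tt'} applied to \eqref{buch}, and Theorem \ref{stcmthm}), identifies the composite with $\W$ via Lemma \ref{ws}, and invokes Lemmas \ref{winv} and \ref{1335} for the inverse, while for part (2) it records exactly your observation that containing $k$ amounts to containing $\P(R)$ and $\Omega^dk$ and that $Q_R(\Omega^dk)\cong\Omega^dk$ in $\lCM(R)$.
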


\begin{proof}
(1) We have the bijections
\begin{align*}
& \{\text{Thick subcategories of }\DD(R)\text{ containing }R\} \\
\overset{e}{=}\quad & \{\text{Thick subcategories of }\DD(R)\text{ containing }\P(R)\} \\
\overset{f}{\to}\quad & \{\text{Thick subcategories of }\DD(R)/\P(R)\} \\
\overset{g}{\to}\quad & \{\text{Thick subcategories of }\lCM(R)\} \\
\overset{h}{\to}\quad & \{\text{Specialization-closed subsets of }\Spec(R)\text{ contained in }\Sing(R)\},
\end{align*}
where $f$ is given by $\X\mapsto\X/\P(R)$, $g$ by $\Y\mapsto Q_R(\Y)$ and $h$ by $\Z\mapsto\lSupp(\Z)$.
The equality $e$ follows from Remark \ref{fact}.
Lemma \ref{t/u} implies that $f$ is bijective, and so is $g$ by \eqref{buch} and Lemma \ref{tt'}.
The fact that $h$ is bijective is shown by Theorem \ref{stcmthm}.
The composition of all the above bijections sends each thick subcategory $\X$ of $\DD(R)$ containing $R$ to the specialization-closed subset $\lSupp(Q_R(\X/\P(R)))$ of $\Spec(R)$ contained in $\Sing(R)$, which coincides with $\W(\X)$ by Lemma \ref{ws}.
Thus the assignment $\X\mapsto\W(\X)$ makes a bijection from the set of thick subcategories of $\DD(R)$ containing $R$ to the set of specialization-closed subsets of $\Spec(R)$ contained in $\Sing(R)$.
Lemmas \ref{winv} and \ref{1335} guarantee that the assignment $\Phi\mapsto\W^{-1}(\Phi)$ makes the inverse map.

(2) This assertion is proved similarly to (1).
Just note that a thick subcategory of $\DD(R)$ containing $R$ and $k$ is nothing but a thick subcategory of $\DD(R)$ containing $\P(R)$ and $\Omega^dk$, and that $Q_R(\Omega^dk)$ is isomorphic to $\Omega^dk$ in $\lCM(R)$.
\end{proof}

\section{Thick subcategories of $\mod(R)$ and $\CM(R)$}

In this section, we consider classifying thick subcategories of $\mod(R)$ and $\CM(R)$ by using the classification theorem of thick subcategories of $\DD(R)$ which has been obtained in the previous section.
We start by introducing the notion of an infinite projective dimension locus for modules.

\begin{dfn}
\begin{enumerate}[(1)]
\item
For an object $M$ of $\mod(R)$, we denote by $\Q(M)$ the set of prime ideals $\p$ of $R$ such that the $R_\p$-module $M_\p$ has infinite projective dimension.
We call this the {\em infinite projective dimension locus} of $M$.
\item
For a full subcategory $\Y$ of $\mod(R)$, we denote by $\Q(\Y)$ the union of $\Q(Y)$ where $Y$ runs through all objects in $\Y$.
We call this the {\em infinite projective dimension locus} of $\Y$.
\item
For a subset $\Phi$ of $\Spec(R)$, we denote by $\Q^{-1}(\Phi)$ the full subcategory of $\mod(R)$ consisting of all objects $M$ of $\mod(R)$ such that $\Q(M)$ is contained in $\Phi$.
\end{enumerate}
\end{dfn}

Note that $\Q^{-1}(\Phi)$ is a strict subcategory of $\mod(R)$.

For a full subcategory $\Y$ of $\mod(R)$, let $\Delta\Y$ denote the full subcategory of $\DD(R)$ consisting of all complexes $\Delta Y$ with $Y\in\Y$.
The infinite projective dimension loci of finitely generated modules are closely related to the infinite projective dimension loci of bounded complexes of finitely generated modules and the nonfree loci of Cohen-Macaulay modules.

\begin{lem}\label{qwv}
\begin{enumerate}[\rm (1)]
\item
\begin{enumerate}[\rm (i)]
\item
For a finitely generated $R$-module $M$, one has $\Q(M)=\W(\Delta M)$.
\item
For a Cohen-Macaulay $R$-module $M$, one has $\Q(M)=\V(M)$.
\end{enumerate}
\item
\begin{enumerate}[\rm (i)]
\item
For a full subcategory $\Y$ of $\mod(R)$, one has $\Q(\Y)=\W(\Delta\Y)$.
\item
For a full subcategory $\Z$ of $\CM(R)$, one has $\Q(\Z)=\V(\Z)$.
\end{enumerate}
\end{enumerate}
\end{lem}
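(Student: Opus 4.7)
The plan is to reduce both parts (1)(i) and (1)(ii) to straightforward local computations, and then obtain part (2) by taking unions over objects.

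For part (1)(i), I would simply unwind definitions. For a prime $\p$, localization commutes with the functor $\Delta$, so $(\Delta M)_\p = \Delta(M_\p)$ in $\DD(R_\p)$. The complex $\Delta(M_\p)$ belongs to $\P(R_\p)$ precisely when the module $M_\p$ admits a bounded resolution by finitely generated projective $R_\p$-modules, i.e., when $\pd_{R_\p}(M_\p)<\infty$. Therefore $\p\in\W(\Delta M)$ if and only if $\p\in\Q(M)$, giving the equality.

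For part (1)(ii), let $M\in\CM(R)$ and fix $\p\in\Spec(R)$. I would first observe that $M_\p$ is maximal Cohen-Macaulay over the Gorenstein local ring $R_\p$: since $R$ is Gorenstein and $M$ is MCM one has $\Ext_R^i(M,R)=0$ for $i>0$, localization yields $\Ext_{R_\p}^i(M_\p,R_\p)=0$ for $i>0$, and over the Gorenstein local ring $R_\p$ this characterizes MCM modules. Now if $\pd_{R_\p}(M_\p)<\infty$, the Auslander--Buchsbaum formula gives $\pd_{R_\p}(M_\p)=\depth R_\p-\depth_{R_\p}M_\p=\dim R_\p-\dim R_\p=0$, so $M_\p$ is free; conversely, $M_\p$ free gives $\pd_{R_\p}(M_\p)=0<\infty$. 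Hence $\p\in\Q(M)\iff\p\in\V(M)$.

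Part (2) is then immediate: taking the union over $Y\in\Y$ in (1)(i) yields $\Q(\Y)=\bigcup_{Y\in\Y}\Q(Y)=\bigcup_{Y\in\Y}\W(\Delta Y)=\W(\Delta\Y)$, and similarly taking the union over $Z\in\Z$ in (1)(ii) yields $\Q(\Z)=\V(\Z)$.

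I do not expect any real obstacle here; the only subtle point is the verification that localizations of MCM modules over a Gorenstein local ring are again MCM, which is needed to apply the Auslander--Buchsbaum formula at the localized ring, and this is handled by the standard $\Ext$-vanishing characterization of MCM modules over Gorenstein rings.
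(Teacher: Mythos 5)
Your proposal is correct and follows essentially the same route as the paper: part (1)(i) is a direct unwinding of definitions, part (1)(ii) localizes, notes that $M_\p$ is maximal Cohen--Macaulay over $R_\p$, and applies the Auslander--Buchsbaum formula, and part (2) follows by taking unions. The only difference is that you spell out the verification that localizations of MCM modules remain MCM (via the $\Ext$-vanishing criterion over Gorenstein rings), which the paper takes for granted.
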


\begin{proof}
(1) The first assertion is clear.
Let $M$ be a Cohen-Macaulay $R$-module and $\p$ a prime ideal of $R$.
Then $M_\p$ is a (maximal) Cohen-Macaulay $R_\p$-module, and it is seen from the Auslander-Buchsbaum formula that the $R_\p$-module $M_\p$ has infinite projective dimension if and only if it is nonfree.
The second assertion follows from this.

(2) This is shown by making use of (1).
\end{proof}

Next we define the restrictions of each subcategory of $\DD(R)$ to $\mod(R)$ and $\CM(R)$.

\begin{dfn}
\begin{enumerate}[(1)]
\item
For a strict full subcategory $\X$ of $\DD(R)$, we denote by $\rest_{\mod}\X$ the {\em restriction} of $\X$ to $\mod(R)$, i.e., the full subcategory of $\mod(R)$ consisting of all finitely generated $R$-modules $M$ such that $\Delta M$ belongs to $\X$.
\item
For a strict full subcategory $\Y$ of $\mod(R)$, we denote by $\rest_{\CM}\Y$ the {\em restriction} of $\Y$ to $\CM(R)$, i.e., the full subcategory of $\CM(R)$ consisting of all Cohen-Macaulay $R$-modules belonging to $\Y$.
\end{enumerate}
\end{dfn}

Note that $\rest_{\mod}\X$ and $\rest_{\CM}\Y$ are strict subcategories of $\mod(R)$ and $\CM(R)$, respectively.

For a full subcategory $\Y$ of $\mod(R)$, we denote by $\Omega\Y$ the full subcategory of $\mod(R)$ consisting of all modules $\Omega Y$ with $Y\in\Y$.
The next two lemmas are concerned with relationships among restrictions, infinite projective dimension loci and nonfree loci.

\begin{lem}\label{wmod}
\begin{enumerate}[\rm (1)]
\item
If $\X$ is a thick subcategory of $\DD(R)$, then $\rest_{\mod}\X$ is a thick subcategory of $\mod(R)$.
\item
For a subset $\Phi$ of $\Spec(R)$, one has $\Q^{-1}(\Phi)=\rest_{\mod}\W^{-1}(\Phi)$.
\item
For a full subcategory $\Y$ of $\mod(R)$, one has $\Q(\Y)=\Q(\Omega\Y)$.
\end{enumerate}
\end{lem}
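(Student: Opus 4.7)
The plan is to handle the three assertions in order, each reducing to a straightforward comparison between the module-level data and the data we already understand at the level of $\DD(R)$ via the functor $\Delta:\mod(R)\to\DD(R)$.

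For (1), I would verify the axioms of a thick subcategory of $\mod(R)$ one by one for $\rest_{\mod}\X$, exploiting the fact that $\Delta$ is fully faithful and sends each short exact sequence in $\mod(R)$ to an exact triangle in $\DD(R)$. The zero module lies in $\rest_{\mod}\X$ since $\X$, being thick, contains $0$ and $\Delta 0=0$. If $M\in\rest_{\mod}\X$ and $M\cong N$ in $\mod(R)$, then $\Delta M\cong\Delta N$ in $\DD(R)$ and strictness of $\X$ gives $\Delta N\in\X$, so $\rest_{\mod}\X$ is strict. If $N$ is a direct summand of $M\in\rest_{\mod}\X$ in $\mod(R)$, then $\Delta N$ is a direct summand of $\Delta M$ in $\DD(R)$, so closedness of $\X$ under summands yields $\Delta N\in\X$. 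Finally, for a short exact sequence $0\to L\to M\to N\to 0$ in $\mod(R)$, the associated exact triangle $\Delta L\to\Delta M\to\Delta N\to$ in $\DD(R)$ together with thickness of $\X$ makes the two-out-of-three property transfer from $\X$ to $\rest_{\mod}\X$.

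For (2), the content is immediate once we invoke Lemma \ref{qwv}(1)(i): for any $M\in\mod(R)$, $\Q(M)=\W(\Delta M)$. Thus $M\in\Q^{-1}(\Phi)$ iff $\W(\Delta M)\subseteq\Phi$ iff $\Delta M\in\W^{-1}(\Phi)$ iff $M\in\rest_{\mod}\W^{-1}(\Phi)$, giving equality of the two subcategories.

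For (3), the key is to compare $\Q(M)$ and $\Q(\Omega M)$ for a single module, and then take a union over $\Y$. Fix $M\in\mod(R)$ and the canonical short exact sequence $0\to\Omega M\to F\to M\to 0$ in which $F$ is free. Localizing at any prime $\p$ gives a short exact sequence with free middle term, so $M_\p$ has finite projective dimension if and only if $(\Omega M)_\p$ does; equivalently $\Q(M)=\Q(\Omega M)$. Taking unions over $Y\in\Y$ yields $\Q(\Y)=\bigcup_{Y\in\Y}\Q(Y)=\bigcup_{Y\in\Y}\Q(\Omega Y)=\Q(\Omega\Y)$. None of the three arguments presents a genuine obstacle; the only mildly delicate point is keeping track that $\rest_{\mod}$ is defined via $\Delta M\in\X$ rather than via objects of $\mod(R)$ literally lying in $\X$, which is what makes the verification in (1) go through cleanly.
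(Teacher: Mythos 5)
Your proposal is correct and follows essentially the same route as the paper: part (1) by transporting the thickness axioms through the fully faithful functor $\Delta$ (which turns short exact sequences into exact triangles and commutes with direct sums), part (2) as an immediate consequence of Lemma \ref{qwv}(1)(i), and part (3) from the localized syzygy sequence $0\to(\Omega M)_\p\to F_\p\to M_\p\to 0$. The paper's proof is merely terser (it dismisses (3) as clear); your write-up supplies the same details it leaves implicit.
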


\begin{proof}
(1) The complex $\Delta 0$ is the zero object of $\DD(R)$.
Hence $\Delta 0$ belongs to $\X$, which says that the zero module $0$ belongs to $\rest_{\mod}\X$.
For $M,N\in\mod(R)$, we have $\Delta(M\oplus N)=\Delta M\oplus \Delta N$.
Each exact sequence $0\to L\to M\to N\to 0$ in $\mod(R)$ induces an exact triangle $\Delta L\to \Delta M\to \Delta N\to$ in $\DD(R)$.
These show that $\rest_{\mod}\X$ is closed under direct summands and short exact sequences.

(2) This follows from Lemma \ref{qwv}(1).

(3) Clear.
\end{proof}

\begin{lem}\label{wcm}
\begin{enumerate}[\rm (1)]
\item
If $\Y$ is a thick subcategory of $\mod(R)$, then $\rest_{\CM}\Y$ is a thick subcategory of $\CM(R)$.
\item
For a subset $\Phi$ of $\Spec(R)$, one has $\V^{-1}(\Phi)=\rest_{\CM}\Q^{-1}(\Phi)$.
\item
For a full subcategory $\Y$ of $\mod(R)$, one has $\Q(\Y)=\V(\Omega^d\Y)$.
\item
For a thick subcategory $\Y$ of $\mod(R)$ containing $R$, one has $\Q(\Y)=\V(\rest_{\CM}\Y)$.
\end{enumerate}
\end{lem}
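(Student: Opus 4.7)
My plan is to establish the four assertions in order, with (1)--(3) reducing quickly to the preceding lemmas and (4) requiring the only nontrivial argument. For (1), I would verify the closure properties of thickness for $\rest_{\CM}\Y$ directly: the zero module is Cohen-Macaulay and lies in $\Y$; a direct summand in $\CM(R)$ of a member of $\rest_{\CM}\Y$ is automatically Cohen-Macaulay and belongs to $\Y$ by thickness; and any short exact sequence of Cohen-Macaulay modules is in particular a short exact sequence in $\mod(R)$, so the two-out-of-three property in $\Y$ transfers to $\rest_{\CM}\Y$. For (2), the definitions unfold to say that $M\in\rest_{\CM}\Q^{-1}(\Phi)$ iff $M$ is Cohen-Macaulay with $\Q(M)\subseteq\Phi$, which by Lemma \ref{qwv}(1)(ii) is equivalent to $M$ being Cohen-Macaulay with $\V(M)\subseteq\Phi$, i.e., $M\in\V^{-1}(\Phi)$.

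For (3), I would iterate Lemma \ref{wmod}(3) $d$ times to obtain $\Q(\Y)=\Q(\Omega\Y)=\cdots=\Q(\Omega^d\Y)$. Since $R$ is Gorenstein of dimension $d$, every $d$-th syzygy is a maximal Cohen-Macaulay module, so $\Omega^d\Y$ is a full subcategory of $\CM(R)$. Applying Lemma \ref{qwv}(2)(ii) then gives $\Q(\Omega^d\Y)=\V(\Omega^d\Y)$, and chaining these equalities yields the claimed identity.

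The inclusion $\V(\rest_{\CM}\Y)\subseteq\Q(\Y)$ in (4) is immediate, because $\rest_{\CM}\Y$ is a full subcategory of $\CM(R)$ contained in $\Y$, so Lemma \ref{qwv}(2)(ii) identifies $\V(\rest_{\CM}\Y)$ with $\Q(\rest_{\CM}\Y)\subseteq\Q(\Y)$. The substance of (4), and the step I expect to be the main obstacle, is the reverse inclusion, for which I plan to upgrade (3) using the hypothesis $R\in\Y$. The idea is to show that for every $Y\in\Y$ the syzygy $\Omega^d Y$ actually lies in $\rest_{\CM}\Y$. From the split short exact sequence $0\to M\to M\oplus N\to N\to 0$, thickness forces $\Y$ to be closed under finite direct sums, so $R^n\in\Y$ for all $n$; in particular every free module in a minimal free resolution of $Y$ belongs to $\Y$. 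Then the defining short exact sequences $0\to\Omega^i Y\to F_{i-1}\to\Omega^{i-1}Y\to 0$ combined with thickness give, by induction on $i$, that $\Omega^i Y\in\Y$ for every $i\ge 0$; taking $i=d$ and noting that $\Omega^d Y$ is Cohen-Macaulay places it in $\rest_{\CM}\Y$. Combining this with (3) yields $\Q(\Y)=\V(\Omega^d\Y)\subseteq\V(\rest_{\CM}\Y)$, completing the proof.
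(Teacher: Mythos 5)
Your proposal is correct and follows essentially the same route as the paper: (1)--(3) are handled by the same reductions to Lemmas \ref{qwv} and \ref{wmod}, and for (4) the paper likewise proves the chain $\V(\rest_{\CM}\Y)=\Q(\rest_{\CM}\Y)\subseteq\Q(\Y)=\V(\Omega^d\Y)\subseteq\V(\rest_{\CM}\Y)$, with the last inclusion resting on exactly your observation that thickness together with $R\in\Y$ forces $\Omega^d\Y\subseteq\rest_{\CM}\Y$. You merely spell out in more detail the induction on syzygy sequences that the paper leaves implicit.
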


\begin{proof}
(1) This is straightforward.

(2) This follows from Lemma \ref{qwv}(1)(ii).

(3) Lemmas \ref{wmod}(3) and \ref{qwv}(2) imply this assertion.

(4) We have
$$
\V(\rest_{\CM}\Y) \overset{\rm (a)}{=} \Q(\rest_{\CM}\Y) \overset{\rm (b)}{\subseteq} \Q(\Y) \overset{\rm (c)}{=} \V(\Omega^d\Y) \overset{\rm (d)}{\subseteq} \V(\rest_{\CM}\Y).
$$
Indeed, the equality (a) follows from Lemma \ref{qwv}(2).
The inclusion (b) is evident.
Assertion (3) implies the equality (c).
Since $\Y$ is thick and contains $R$, the subcategory $\Omega^d\Y$ is contained in $\rest_{\CM}\Y$.
This shows the inclusion (d).
\end{proof}

Now we can obtain a classification result of thick subcategories of $\mod(R)$.

\begin{thm}\label{thmmod}
\begin{enumerate}[\rm (1)]
\item
Let $R$ be an abstract hypersurface.
Then one has the following one-to-one correspondence:
$$
\begin{CD}
\left\{
\begin{matrix}
\text{Thick subcategories of }\mod(R)\\
\text{containing }R
\end{matrix}
\right\}
\begin{matrix}
@>{\Q}>>\\
@<<{\Q^{-1}}<
\end{matrix}
\left\{
\begin{matrix}
\text{Specialization-closed subsets}\\
\text{of }\Spec(R)\text{ contained in }\Sing(R)
\end{matrix}
\right\}.
\end{CD}
$$
\item
Let $R$ be singular, and be locally an abstract hypersurface on the punctured spectrum.
Then one has the following one-to-one correspondence:
$$
\begin{CD}
\left\{
\begin{matrix}
\text{Thick subcategories of }\mod(R)\\
\text{containing }R\text{ and }k
\end{matrix}
\right\}
\begin{matrix}
@>{\Q}>>\\
@<<{\Q^{-1}}<
\end{matrix}
\left\{
\begin{matrix}
\text{Nonempty specialization-closed subsets}\\
\text{of }\Spec(R)\text{ contained in }\Sing(R)
\end{matrix}
\right\}.
\end{CD}
$$
\end{enumerate}
\end{thm}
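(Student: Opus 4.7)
The plan is to deduce Theorem \ref{thmmod} from the already-established Theorem \ref{thmder} on $\DD(R)$, combined with the classification of thick subcategories of $\CM(R)$ in Theorem \ref{stcmthm}, via the translation lemmas in this section. I treat case (1) explicitly; case (2) is entirely analogous once one observes that when $k\in\Y$ the Cohen-Macaulay module $\Omega^d k$ lies in $\rest_{\CM}\Y$ (so $\rest_{\CM}\Y$ meets the hypotheses of Theorem \ref{stcmthm}(2)), and $\m\in\Q(k)\subseteq\Q(\Y)$ forces $\Q(\Y)$ to be nonempty.

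For the round trip $\Q\circ\Q^{-1}=\mathrm{id}$ on specialization-closed subsets of $\Sing(R)$, I rewrite $\Q^{-1}(\Phi)=\rest_{\mod}\W^{-1}(\Phi)$ using Lemma \ref{wmod}(2). The inclusion $\Q(\Q^{-1}(\Phi))\subseteq\Phi$ is then immediate from Lemma \ref{qwv}(1)(i) and the definition of $\W^{-1}(\Phi)$. For the reverse inclusion, any $\p\in\Phi$ lies in $\Sing(R)$, so Lemma \ref{phi} yields $\W(\Delta(R/\p))=V(\p)\subseteq\Phi$ by specialization-closedness; hence $\Delta(R/\p)\in\W^{-1}(\Phi)$, which means $R/\p\in\Q^{-1}(\Phi)$, and therefore $\p\in V(\p)=\Q(R/\p)\subseteq\Q(\Q^{-1}(\Phi))$.

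The substantive direction is $\Q^{-1}\circ\Q=\mathrm{id}$. Let $\Y$ be thick in $\mod(R)$ containing $R$, and take any $M$ with $\Q(M)\subseteq\Q(\Y)$. By Lemma \ref{wcm}(1), the subcategory $\rest_{\CM}\Y$ is thick in $\CM(R)$ and contains $R$, and by Lemma \ref{wcm}(4), $\Q(\Y)=\V(\rest_{\CM}\Y)$. Since $\Omega^d M$ is Cohen-Macaulay, Lemma \ref{qwv}(1)(ii) together with Lemma \ref{wmod}(3) gives $\V(\Omega^d M)=\Q(\Omega^d M)=\Q(M)\subseteq\V(\rest_{\CM}\Y)$, so the bijectivity of $\V$ in Theorem \ref{stcmthm}(1) forces $\Omega^d M\in\V^{-1}(\V(\rest_{\CM}\Y))=\rest_{\CM}\Y\subseteq\Y$.

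The remaining step is to climb back from $\Omega^d M$ to $M$. The short exact sequences $0\to\Omega^{i+1}M\to R^{n_i}\to\Omega^i M\to 0$ lie inside $\Y$: each free module $R^{n_i}$ belongs to $\Y$ because $R$ does and $\Y$ is closed under finite direct sums, so closure under short exact sequences yields $\Omega^i M\in\Y$ by downward induction on $i$, ending with $M=\Omega^0 M\in\Y$. The main obstacle in this plan is precisely this syzygy reconstruction, which is conceptually simple but is the crucial link that transfers the classification of $\CM(R)$ to all of $\mod(R)$; the rest is a routine bookkeeping of the identities $\Q=\W\circ\Delta$ and $\Q\circ\Omega^d=\V\circ\Omega^d$ supplied by the lemmas of this section.
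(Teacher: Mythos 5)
Your proposal follows essentially the same route as the paper: the surjectivity direction via $R/\p$ and Lemma \ref{phi}, and the injectivity direction via $\Omega^d M$, Lemma \ref{wcm}(2)(4), and the identity $\V^{-1}(\V(\rest_{\CM}\Y))=\rest_{\CM}\Y$ from Theorem \ref{stcmthm}, are exactly the paper's two Claims; your explicit syzygy descent $0\to\Omega^{i+1}M\to R^{n_i}\to\Omega^iM\to 0$ just spells out the step the paper dismisses with ``we easily see.'' The argument is correct (note only that, despite your opening sentence, Theorem \ref{thmder} is not actually needed anywhere in it, just as in the paper).
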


\begin{proof}
(1) Let $\Y$ be a thick subcategory of $\mod(R)$ containing $R$, and let $\Phi$ be a specialization-closed subset of $\Spec(R)$ contained in $\Sing(R)$.
Lemmas \ref{bwx} and \ref{qwv}(2) say that $\Q(\Y)$ is a specialization-closed subset of $\Spec(R)$ contained in $\Sing(R)$.
It follows from (1), (2) in Lemma \ref{wmod} and Lemma \ref{winv} that $\Q^{-1}(\Phi)$ is a thick subcategory of $\mod(R)$ containing $R$.
We establish two claims, which complete the proof of the first assertion of the theorem.

\begin{claim}
The equality $\Phi=\Q(\Q^{-1}(\Phi))$ holds.
\end{claim}

\begin{cpf}
It is obvious that $\Phi$ contains $\Q(\Q^{-1}(\Phi))$.
If $\p$ is a prime ideal in $\Phi$, then Lemmas \ref{phi} and \ref{qwv}(1) yield $\Q(R/\p)=\W(\Delta(R/\p))=V(\p)\subseteq\Phi$.
Hence $R/\p$ belongs to $\Q^{-1}(\Phi)$, and $\p$ is in $\Q(\Q^{-1}(\Phi))$.
\renewcommand{\qedsymbol}{$\square$}
\qed
\end{cpf}

\begin{claim}
The equality $\Y=\Q^{-1}(\Q(\Y))$ holds.
\end{claim}

\begin{cpf}
Evidently, $\Y$ is contained in $\Q^{-1}(\Q(\Y))$.
Let $M$ be an $R$-module in $\Q^{-1}(\Q(\Y))$.
Then $\Q(\Omega^dM)=\Q(M)$ by Lemma \ref{wmod}(3), which is contained in $\Q(\Y)$.
Hence $\Omega^dM$ belongs to $\rest_{\CM}\Q^{-1}(\Q(\Y))$.
We have
$$
\rest_{\CM}\Q^{-1}(\Q(\Y))=\V^{-1}(\Q(\Y))=\V^{-1}(\V(\rest_{\CM}\Y))
$$
by (2) and (4) in Lemma \ref{wcm}.
Since $\rest_{\CM}\Y$ is a thick subcategory of $\CM(R)$ containing $R$ by Lemma \ref{wcm}(1), Theorem \ref{stcmthm}(1) implies that $\V^{-1}(\V(\rest_{\CM}\Y))$ coincides with $\rest_{\CM}\Y$.
Therefore $\Omega^dM$ belongs to $\Y$.
Since $\Y$ is a thick subcategory of $\mod(R)$ containing $R$, we easily see that $M$ belongs to $\Y$.
\renewcommand{\qedsymbol}{$\square$}
\qed
\end{cpf}

(2) Let $\Y$ be a thick subcategory of $\mod(R)$ containing $R$ and $k$, and let $\Phi$ be a nonempty specialization-closed subset of $\Spec(R)$ contained in $\Sing(R)$.
As $R$ is singular, we have $\Q(k)=\{\m\}$.
Hence $\Q(\Y)$ is a nonempty specialization-closed subset of $\Spec(R)$ contained in $\Sing(R)$ by Lemmas \ref{bwx} and \ref{qwv}(2).
Since $\Phi$ is nonempty and specialization-closed, it contains the maximal ideal $\m$, and hence $k$ belongs to $\Q^{-1}(\Phi)$.
Hence $\Q^{-1}(\Phi)$ is a thick subcategory of $\mod(R)$ containing $R$ and $k$ by (1),(2) in Lemma \ref{wmod} and Lemma \ref{winv}.

(The proof of) Claim 1 gives the equality $\Phi=\Q(\Q^{-1}(\Phi))$.
The equality $\Y=\Q^{-1}(\Q(\Y))$ is obtained from the proof of Claim 2 where Theorem \ref{stcmthm}(1) is replaced with Theorem \ref{stcmthm}(2).
(Note that $\rest_{\CM}\Y$ contains $\Omega^dk$ since so does $\Y$.)
\end{proof}

We recall here the definitions of the thick closures of subcategories.

\begin{dfn}
\begin{enumerate}[(1)]
\item
For a full subcategory $\X$ of $\DD(R)$, we denote by $\thick_{\DD}\X$ the {\em thick closure} of $\X$ in $\DD(R)$, that is, the smallest thick subcategory of $\DD(R)$ containing $\X$.
\item
For a full subcategory $\Y$ of $\mod(R)$, we simply write $\thick_{\DD}\Y=\thick_{\DD}\Delta\Y$.
\item
For a full subcategory $\Y$ of $\mod(R)$, we denote by $\thick_{\mod}\Y$ the {\em thick closure} of $\Y$ in $\mod(R)$, that is, the smallest thick subcategory of $\mod(R)$ containing $\Y$.
\end{enumerate}
\end{dfn}

Thick closures do not enlarge infinite projective dimension loci.

\begin{lem}\label{clos}
\begin{enumerate}[\rm (1)]
\item
Let $\X$ be a full subcategory of $\DD(R)$.
Then $\W(\X)=\W(\thick_{\DD}\X)$.
\item
Let $\Y$ be a full subcategory of $\mod(R)$.
Then $\Q(\Y)=\Q(\thick_{\mod}\Y)$.
\end{enumerate}
\end{lem}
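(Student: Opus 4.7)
The plan is to prove both inclusions of (1) by combining the trivial monotonicity $\W(\X)\subseteq\W(\thick_\DD\X)$ with a minimality argument for the reverse inclusion. Set $\Phi=\W(\X)$. By Lemma \ref{bwx}, $\Phi$ is a specialization-closed subset of $\Spec(R)$ contained in $\Sing(R)$, and by Lemma \ref{winv} the subcategory $\W^{-1}(\Phi)$ is thick in $\DD(R)$ (and automatically contains $\P(R)$, though that will not matter here). Every $X\in\X$ satisfies $\W(X)\subseteq\W(\X)=\Phi$, so $\X\subseteq\W^{-1}(\Phi)$. By the defining minimality of the thick closure, $\thick_\DD\X\subseteq\W^{-1}(\Phi)$, i.e.\ $\W(T)\subseteq\Phi$ for every $T\in\thick_\DD\X$. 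Taking the union over $T$ gives $\W(\thick_\DD\X)\subseteq\W(\X)$, as required.

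For (2), I would mimic the same template inside $\mod(R)$. Set $\Psi=\Q(\Y)$. Lemma \ref{wmod}(2) identifies $\Q^{-1}(\Psi)=\rest_{\mod}\W^{-1}(\Psi)$, and Lemma \ref{wmod}(1) combined with the thickness of $\W^{-1}(\Psi)$ (Lemma \ref{winv}) shows that $\Q^{-1}(\Psi)$ is a thick subcategory of $\mod(R)$. Clearly $\Y\subseteq\Q^{-1}(\Psi)$, hence $\thick_{\mod}\Y\subseteq\Q^{-1}(\Psi)$, and unwinding gives $\Q(\thick_{\mod}\Y)\subseteq\Q(\Y)$. The reverse inclusion is immediate from $\Y\subseteq\thick_{\mod}\Y$.

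I do not expect a genuine obstacle here: the entire content is that the sub\-categories $\W^{-1}(\Phi)$ and $\Q^{-1}(\Phi)$ cut out by fixing the locus are already closed under the thick-subcategory operations, so the thick closure cannot escape them. The only point requiring mild care is verifying that $\Q^{-1}(\Psi)$ is thick in $\mod(R)$ rather than merely in $\DD(R)$, and this is precisely what Lemma \ref{wmod} was set up to furnish.
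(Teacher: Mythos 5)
Your proposal is correct and follows the paper's argument exactly: the reverse inclusion comes from observing that $\W^{-1}(\W(\X))$ (resp.\ $\Q^{-1}(\Q(\Y))$) is a thick subcategory containing $\X$ (resp.\ $\Y$), hence contains the thick closure, using Lemma \ref{winv} for (1) and Lemmas \ref{wmod}(1),(2) together with Lemma \ref{winv} for (2). Nothing is missing.
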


\begin{proof}
(1) It is trivial that $\W(\X)$ is contained in $\W(\thick_{\DD}\X)$.
Lemma \ref{winv} implies that $\W^{-1}(\W(\X))$ is thick.
Since it contains $\X$, it also contains $\thick_{\DD}\X$.
Therefore $\W(\X)$ contains $\W(\thick_{\DD}\X)$.

(2) This is similarly shown to the first assertion.
Use (1),(2) in Lemma \ref{wmod} and Lemma \ref{winv}.
\end{proof}

Restrictions and thick closures make a bijection between thick subcategories of $\DD(R)$ and $\mod(R)$.

\begin{thm}\label{dermod}
\begin{enumerate}[\rm (1)]
\item
Let $R$ be an abstract hypersurface.
Then one has the following one-to-one correspondence:
$$
\begin{CD}
\left\{
\begin{matrix}
\text{Thick subcategories of }\DD(R)\\
\text{containing }R
\end{matrix}
\right\}
\begin{matrix}
@>{\rest_{\mod}}>>\\
@<<{\thick_{\DD}}<
\end{matrix}
\left\{
\begin{matrix}
\text{Thick subcategories of }\mod(R)\\
\text{containing }R
\end{matrix}
\right\}.
\end{CD}
$$
\item
Let $R$ be singular, and be locally an abstract hypersurface on the punctured spectrum.
Then one has the following one-to-one correspondence:
$$
\begin{CD}
\left\{
\begin{matrix}
\text{Thick subcategories of }\DD(R)\\
\text{containing }R\text{ and }k
\end{matrix}
\right\}
\begin{matrix}
@>{\rest_{\mod}}>>\\
@<<{\thick_{\DD}}<
\end{matrix}
\left\{
\begin{matrix}
\text{Thick subcategories of }\mod(R)\\
\text{containing }R\text{ and }k
\end{matrix}
\right\}.
\end{CD}
$$
\end{enumerate}
\end{thm}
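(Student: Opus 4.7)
The plan is to deduce this theorem formally from the two previously-established classifications. Theorem \ref{thmder} identifies the left-hand side with the set $\A$ of specialization-closed subsets of $\Spec(R)$ contained in $\Sing(R)$ (nonempty in case (2)) via $\W$, and Theorem \ref{thmmod} identifies the right-hand side with the same set $\A$ via $\Q$. I would show that $\rest_{\mod}$ and $\thick_{\DD}$ are compatible with these two bijections, forcing them to be mutually inverse.

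For compatibility of $\rest_{\mod}$, given a thick subcategory $\X$ of $\DD(R)$ containing $R$, I would verify $\Q(\rest_{\mod}\X)=\W(\X)$. By Theorem \ref{thmder}(1), $\X=\W^{-1}(\W(\X))$; and by Lemma \ref{wmod}(2), $\rest_{\mod}\W^{-1}(\Phi)=\Q^{-1}(\Phi)$ for any subset $\Phi$ of $\Spec(R)$. Hence $\rest_{\mod}\X=\Q^{-1}(\W(\X))$, and applying $\Q$ and using Theorem \ref{thmmod}(1) gives the claimed equality. In tandem, Lemma \ref{wmod}(1) ensures $\rest_{\mod}\X$ is a thick subcategory of $\mod(R)$, and it visibly contains $R$ since $\Delta R\in\X$.

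For compatibility of $\thick_{\DD}$, given a thick subcategory $\Y$ of $\mod(R)$ containing $R$, I would verify $\W(\thick_{\DD}\Y)=\Q(\Y)$. By definition $\thick_{\DD}\Y=\thick_{\DD}\Delta\Y$; Lemma \ref{clos}(1) yields $\W(\thick_{\DD}\Delta\Y)=\W(\Delta\Y)$; and Lemma \ref{qwv}(2)(i) gives $\W(\Delta\Y)=\Q(\Y)$. Evidently $\thick_{\DD}\Y$ is a thick subcategory of $\DD(R)$ containing $R$. Combining the two compatibilities, both composites $\rest_{\mod}\circ\thick_{\DD}$ and $\thick_{\DD}\circ\rest_{\mod}$ correspond to the identity under the bijections of Theorems \ref{thmder}(1) and \ref{thmmod}(1), so the two maps are mutually inverse, establishing (1).

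Part (2) is handled by exactly the same argument, invoking Theorems \ref{thmder}(2) and \ref{thmmod}(2) in place of their part~(1) counterparts; the only extra point to check is that the condition of containing $k$ is preserved in both directions, which is immediate ($\Delta k\in\X$ iff $k\in\rest_{\mod}\X$, and $k\in\Y$ obviously implies $\Delta k\in\thick_{\DD}\Y$). I do not expect a substantive obstacle in this proof: the main content has already been extracted in Sections 3 and 4, and Theorem \ref{dermod} is a formal consequence of those classifications together with Lemmas \ref{qwv}, \ref{wmod}, and \ref{clos}.
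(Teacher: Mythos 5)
Your proposal is correct and follows essentially the same route as the paper: both deduce the result formally from Theorems \ref{thmder} and \ref{thmmod} by showing, via Lemma \ref{wmod}(2), Lemma \ref{clos}(1) and Lemma \ref{qwv}(2), that $\rest_{\mod}$ and $\thick_{\DD}$ agree with the composite bijections $\Q^{-1}\circ\W$ and $\W^{-1}\circ\Q$. The only cosmetic difference is that you phrase this as compatibility of the two maps with $\W$ and $\Q$ and then invoke injectivity, whereas the paper computes the composites directly; the content is identical.
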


\begin{proof}
(1) We observe from Theorems \ref{thmder}(1) and \ref{thmmod}(1) that the assignment $\X\mapsto\Q^{-1}(\W(\X))$ makes a bijection from the set of thick subcategories of $\DD(R)$ containing $R$ to the set of thick subcategories of $\mod(R)$ containing $R$ whose inverse map is given by $\Y\mapsto\W^{-1}(\Q(\Y))$.
We have
$$
\Q^{-1}(\W(\X))=\rest_{\mod}\W^{-1}(\W(\X))=\rest_{\mod}\X
$$
by Lemma \ref{wmod}(2) and Theorem \ref{thmder}(1).
Also, we have
$$
\W^{-1}(\Q(\Y))=\W^{-1}(\W(\Delta\Y))=\W^{-1}(\W(\thick_{\DD}\Delta\Y))=\thick_{\DD}\Delta\Y=\thick_{\DD}\Y
$$
by Lemmas \ref{qwv}(2), \ref{clos}(1) and Theorem \ref{thmder}(1).

(2) In the proof of the first assertion, use Theorems \ref{thmder}(2) and \ref{thmmod}(2) instead of Theorems \ref{thmder}(1) and \ref{thmmod}(1).
\end{proof}

We obtain an analogous result to Theorem \ref{dermod}.

\begin{thm}\label{modcm}
\begin{enumerate}[\rm (1)]
\item
Let $R$ be an abstract hypersurface.
Then one has the following one-to-one correspondence:
$$
\begin{CD}
\left\{
\begin{matrix}
\text{Thick subcategories of }\mod(R)\\
\text{containing }R
\end{matrix}
\right\}
\begin{matrix}
@>{\rest_{\CM}}>>\\
@<<{\thick_{\mod}}<
\end{matrix}
\left\{
\begin{matrix}
\text{Thick subcategories of }\CM(R)\\
\text{containing }R
\end{matrix}
\right\}.
\end{CD}
$$
\item
Let $R$ be singular, and be locally an abstract hypersurface on the punctured spectrum.
Then one has the following one-to-one correspondence:
$$
\begin{CD}
\left\{
\begin{matrix}
\text{Thick subcategories of }\mod(R)\\
\text{containing }R\text{ and }k
\end{matrix}
\right\}
\begin{matrix}
@>{\rest_{\CM}}>>\\
@<<{\thick_{\mod}}<
\end{matrix}
\left\{
\begin{matrix}
\text{Thick subcategories of }\CM(R)\\
\text{containing }R\text{ and }\Omega^dk
\end{matrix}
\right\}.
\end{CD}
$$
\end{enumerate}
\end{thm}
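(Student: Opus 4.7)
The plan is to mimic the proof of Theorem~\ref{dermod}: combine the two classification theorems that already express both sides in terms of specialization-closed subsets of $\Spec(R)$ contained in $\Sing(R)$, and then identify the resulting set-theoretic bijection with the explicit functorial maps $\rest_{\CM}$ and $\thick_{\mod}$. Concretely, for part (1), Theorem~\ref{thmmod}(1) gives a bijection between thick subcategories of $\mod(R)$ containing $R$ and specialization-closed subsets of $\Spec(R)$ contained in $\Sing(R)$ via $\Y\mapsto\Q(\Y)$ and $\Phi\mapsto\Q^{-1}(\Phi)$, while Theorem~\ref{stcmthm}(1) gives a bijection between thick subcategories of $\CM(R)$ containing $R$ and the same set of subsets via $\Z\mapsto\V(\Z)$ and $\Phi\mapsto\V^{-1}(\Phi)$. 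Composing these yields a bijection sending $\Y\mapsto\V^{-1}(\Q(\Y))$ with inverse $\Z\mapsto\Q^{-1}(\V(\Z))$.

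The substance of the argument is then to check that these two composite maps agree with $\rest_{\CM}$ and $\thick_{\mod}$. For the forward direction, Lemma~\ref{wcm}(2) and Theorem~\ref{thmmod}(1) give
\[
\V^{-1}(\Q(\Y))=\rest_{\CM}\Q^{-1}(\Q(\Y))=\rest_{\CM}\Y.
\]
For the inverse direction, Lemma~\ref{qwv}(2)(ii) yields $\V(\Z)=\Q(\Z)$ since $\Z\subseteq\CM(R)$, then Lemma~\ref{clos}(2) gives $\Q(\Z)=\Q(\thick_{\mod}\Z)$, and Theorem~\ref{thmmod}(1) applied to the thick subcategory $\thick_{\mod}\Z$ of $\mod(R)$ (which contains $R$ because $\Z$ does) shows
\[
\Q^{-1}(\V(\Z))=\Q^{-1}(\Q(\thick_{\mod}\Z))=\thick_{\mod}\Z.
\]
Along the way I would note that $\rest_{\CM}\Y$ is indeed a thick subcategory of $\CM(R)$ containing $R$ (it is closed under direct summands, and a short exact sequence of Cohen-Macaulay modules is in particular a short exact sequence in $\mod(R)$, with $R\in\rest_{\CM}\Y$ automatic from $R\in\Y$), and that $\thick_{\mod}\Z$ clearly contains $R$.

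For part (2), the argument is formally identical, with Theorem~\ref{thmmod}(2) and Theorem~\ref{stcmthm}(2) replacing their part-(1) counterparts. The only extra point is to verify that the extra generator is preserved: if $\Y$ contains $k$, then since $\Y$ is thick and $R\in\Y$, syzygies of $k$ lie in $\Y$; in particular $\Omega^dk\in\rest_{\CM}\Y$, so $\rest_{\CM}\Y$ contains both $R$ and $\Omega^dk$. Conversely, if $\Z$ is a thick subcategory of $\CM(R)$ containing $R$ and $\Omega^dk$, then $\thick_{\mod}\Z$ contains $\Omega^dk$, and since $k$ lies in the thick subcategory generated by $R$ and $\Omega^dk$ in $\mod(R)$ (via the short exact sequences in a minimal free resolution of $k$), we get $k\in\thick_{\mod}\Z$.

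No step poses a serious obstacle; the whole statement is essentially a formal consequence of the two previously established classifications. The only point requiring a little care is the bookkeeping in part~(2) showing that the generator conditions ``contains $k$'' on the $\mod(R)$ side and ``contains $\Omega^dk$'' on the $\CM(R)$ side correspond to each other under $\rest_{\CM}$ and $\thick_{\mod}$, but this is routine using thickness and syzygies together with the inclusion $R\in\Y$.
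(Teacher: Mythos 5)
Your proposal is correct and follows essentially the same route as the paper: compose the bijections of Theorems \ref{thmmod} and \ref{stcmthm}, then identify $\V^{-1}(\Q(\Y))$ with $\rest_{\CM}\Y$ via Lemma \ref{wcm}(2) and $\Q^{-1}(\V(\Z))$ with $\thick_{\mod}\Z$ via Lemmas \ref{qwv}(2) and \ref{clos}(2). Your extra bookkeeping in part (2) about the correspondence of the generators $k$ and $\Omega^dk$ is a detail the paper leaves implicit, and it is handled correctly.
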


\begin{proof}
(1) It is seen from Theorems \ref{thmmod}(1) and \ref{stcmthm}(1) that we have a bijection $\Y\mapsto\V^{-1}(\Q(\Y))$ from the set of thick subcategories of $\mod(R)$ containing $R$ to the set of thick subcategories of $\CM(R)$ containing $R$ whose inverse map is given by $\Z\mapsto\Q^{-1}(\V(\Z))$.
The equalities
$$
\V^{-1}(\Q(\Y))=\rest_{\CM}\Q^{-1}(\Q(\Y))=\rest_{\CM}\Y
$$
hold by Lemma \ref{wcm}(2) and Theorem \ref{thmmod}(1), and the equalities
$$
\Q^{-1}(\V(\Z))=\Q^{-1}(\Q(\Z))=\Q^{-1}(\Q(\thick_{\mod}\Z))=\thick_{\mod}\Z
$$
hold by Lemmas \ref{qwv}(2), \ref{clos}(2) and Theorem \ref{thmmod}(1).

(2) In the proof of the first assertion, use Theorems \ref{thmmod}(2) and \ref{stcmthm}(2) instead of Theorems \ref{thmmod}(1) and \ref{stcmthm}(1).
\end{proof}

\section{The main theorem}

Combining the theorems that have been obtained in Sections 2--4 yields the following theorem, which is the main result of this paper.
We notice that the diagram in the following theorem includes the diagram in Theorem \ref{stcmthm}.

\begin{thm}\label{main}
Consider the following two cases.
\begin{enumerate}[\rm (1)]
\item
Let $R$ be an abstract hypersurface local ring.
Set
\begin{align*}
\A & = \{\text{Specialization-closed subsets of }\Spec(R)\text{ contained in }\Sing(R)\},\\
\B & = \{\text{Thick subcategories of }\lCM(R)\},\\
\C & = \{\text{Thick subcategories of }\CM(R)\text{ containing }R\},\\
\D & = \{\text{Thick subcategories of }\mod(R)\text{ containing }R\},\\
\E & = \{\text{Thick subcategories of }\DD(R)\text{ containing }R\}.
\end{align*}
\item
Let $R$ be a $d$-dimensional Gorenstein singular local ring with residue field $k$ which is locally an abstract hypersurface on the punctured spectrum.
Set
\begin{align*}
\A & = \{\text{Nonempty specialization-closed subsets of }\Spec(R)\text{ contained in }\Sing(R)\},\\
\B & = \{\text{Thick subcategories of }\lCM(R)\text{ containing }\Omega^dk\},\\
\C & = \{\text{Thick subcategories of }\CM(R)\text{ containing }R\text{ and }\Omega^dk\},\\
\D & = \{\text{Thick subcategories of }\mod(R)\text{ containing }R\text{ and }k\},\\
\E & = \{\text{Thick subcategories of }\DD(R)\text{ containing }R\text{ and }k\}.
\end{align*}
\end{enumerate}
In each of the above two cases, one has the following commutative diagram of bijections.
$$
\xymatrix{
& & & & & & \A \ar@<0.5mm>[llllllddd]^{\lSupp^{-1}} \ar@<0.5mm>[llddd]^{\V^{-1}} \ar@<0.5mm>[rrddd]^{\Q^{-1}} \ar@<0.5mm>[rrrrrrddd]^{\W^{-1}} \\
\\
\\
\B \ar@<0.5mm>[rrrrrruuu]^{\lSupp} \ar@<0.5mm>[rrrr]^{\nat} & & & & \C \ar@<0.5mm>[llll]^{\nat} \ar@<0.5mm>[rruuu]^{\V} \ar@<0.5mm>[rrrr]^{\thick_{\mod}} & & & & \D \ar@<0.5mm>[lluuu]^{\Q} \ar@<0.5mm>[llll]^{\rest_{\CM}} \ar@<0.5mm>[rrrr]^{\thick_{\DD}} & & & & \E \ar@<0.5mm>[llll]^{\rest_{\mod}} \ar@<0.5mm>[lllllluuu]^{\W}
}
$$
\end{thm}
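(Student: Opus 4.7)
The plan is to assemble the diagram from bijections that have already been proved: the spokes $\A\leftrightarrow\B$ and $\A\leftrightarrow\C$ are established by Theorem \ref{stcmthm}, the spoke $\A\leftrightarrow\D$ by Theorem \ref{thmmod}, and the spoke $\A\leftrightarrow\E$ by Theorem \ref{thmder}; the rim edges $\B\leftrightarrow\C$ by Theorem \ref{stcmthm}, $\C\leftrightarrow\D$ by Theorem \ref{modcm}, and $\D\leftrightarrow\E$ by Theorem \ref{dermod}. Since every arrow in the diagram is already known to be a bijection with the specified inverse, the only remaining task is to verify that the diagram commutes.

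First I would verify the three triangles with apex $\A$ in the direction pointing away from $\A$. The triangle $(\A,\B,\C)$ commutes by Theorem \ref{stcmthm}. For the triangle $(\A,\C,\D)$, the required identity $\rest_{\CM}\Q^{-1}(\Phi)=\V^{-1}(\Phi)$ is exactly Lemma \ref{wcm}(2). For the triangle $(\A,\D,\E)$, the required identity $\rest_{\mod}\W^{-1}(\Phi)=\Q^{-1}(\Phi)$ is exactly Lemma \ref{wmod}(2).

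Next I would verify the same triangles traversed in the reverse direction, which amounts to the four identities $\V\circ\rest_{\CM}=\Q$, $\Q\circ\thick_{\mod}=\V$, $\W\circ\thick_{\DD}=\Q$, and $\Q\circ\rest_{\mod}=\W$. The first is Lemma \ref{wcm}(4). The second follows by combining Lemma \ref{clos}(2) with the equality $\Q(\Z)=\V(\Z)$ for Cohen-Macaulay $\Z$ (Lemma \ref{qwv}(2)(ii)). The third follows from Lemma \ref{clos}(1) together with $\W(\Delta\Y)=\Q(\Y)$ (Lemma \ref{qwv}(2)(i)) and the definition $\thick_{\DD}\Y=\thick_{\DD}\Delta\Y$. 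The fourth requires a short argument: one inclusion is immediate since each $M\in\rest_{\mod}\X$ satisfies $\Q(M)=\W(\Delta M)\subseteq\W(\X)$; for the reverse, given $\p\in\W(\X)$, the specialization-closedness of $\W(\X)$ inside $\Sing(R)$ (Lemma \ref{bwx}) yields $V(\p)\subseteq\W(\X)$, so $\Delta(R/\p)\in\W^{-1}(\W(\X))=\X$ by Theorem \ref{thmder}, hence $R/\p\in\rest_{\mod}\X$ and $\p\in V(\p)=\Q(R/\p)\subseteq\Q(\rest_{\mod}\X)$.

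The main obstacle has already been cleared in Sections 2--4: no essentially new difficulty arises here, since the proof amounts to splicing together the previously established lemmas. The cases (1) and (2) are handled by the same reasoning, invoking the appropriate parts of Theorems \ref{stcmthm}, \ref{thmder}, \ref{thmmod}, \ref{modcm}, \ref{dermod}; the only point needing a moment's thought in case (2) is that the respective generating-object conditions on each of $\B,\C,\D,\E$ are compatible with all arrows in the diagram, but this has already been checked in the proofs of those theorems.
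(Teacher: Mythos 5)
Your proposal is correct and takes essentially the same approach as the paper: the paper's proof likewise just combines Theorems \ref{stcmthm}, \ref{thmder}, \ref{thmmod}, \ref{dermod} and \ref{modcm} for the bijections and invokes Lemmas \ref{wmod}(2) and \ref{wcm}(2) for the commutativity of the right and middle triangles. Your extra verification of the reverse directions is harmless redundancy, since once all arrows are known bijections with the stated inverses, commutativity in one direction of each triangle forces it in the other.
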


\begin{proof}
Combining Theorems \ref{stcmthm}, \ref{thmder}, \ref{thmmod}, \ref{dermod} and \ref{modcm}, we obtain all the bijections and the commutativity of the left triangle.
The commutativity of the right and middle triangles follows from Lemmas \ref{wmod}(2) and \ref{wcm}(2), respectively.
\end{proof}

\begin{rem}
Recently, Iyengar \cite{I2} announced that thick subcategories of the bounded derived category of finitely generated modules over a ring locally complete intersection which is essentially of finite type over a field are classified in terms of certain subsets of the prime ideal spectrum of the Hochschild cohomology ring.
It would be interesting to compare the results and approaches with ours.
\end{rem}

\begin{ac}
The author is grateful to the referee that he/she read the paper carefully and gave the author helpful comments.
\end{ac}

\end{document}